\theoremstyle{plain}
 \newtheorem{theorem}{Theorem}[section]
 \newtheorem{proposition}[theorem]{Proposition}
 \newtheorem{lemma}[theorem]{Lemma}
\theoremstyle{definition}
 \newtheorem{example}[theorem]{Example}
 \newtheorem{remark}[theorem]{Remark}
\newcommand{\PP}{\mathbb{P}}
\newcommand{\QQ}{\mathbb{Q}}
\newcommand{\ZZ}{\mathbb{Z}}
\newcommand{\CC}{\mathbb{C}}
\newcommand{\xycenter}[1]{\begin{center} \mbox{\xymatrix{#1}}\end{center}}
\newcommand{\rmspec}{\mathrm{Spec}\,}
\newcommand{\xlabel}[1]{
                        \label{#1}
                        \ifthenelse{\boolean{xlabels}}
                                   {\marginpar[\hfill{tiny #1}]{{\tiny #1}}}
                                    {}
                          }
\newcommand{\slC} {\mathrm{SL}_3 (\CC )}
\begin{document}
\title[A Clebsch-Gordan formula]{A Clebsch-Gordan formula for $\slC$ and applications to rationality}
\author{Christian B\"ohning}
\author{Hans-Christian Graf v. Bothmer}
\thanks{Supported by the German Research Foundation 
(Deutsche Forschungsgemeinschaft (DFG)) through 
the Institutional Strategy of the University of G\"ottingen}
\maketitle

\begin{abstract}
If $R$, $S$, $T$ are irreducible $\mathrm{SL}_3 (\CC )$-representations, we give an easy and explicit description of a basis of the space
of equivariant maps $R\otimes S \to T$ (Theorem \ref{tClebschGordan}). We apply this method to the rationality problem for invariant function
fields. In particular, we prove the rationality of the moduli space of plane curves of degree $34$. This uses a criterion which ensures the
stable rationality of some quotients of  Grassmannians by an $\mathrm{SL}$-action (Proposition \ref{pStabRatGrassmannians}).
\end{abstract}

\section{Introduction}
For the group $\mathrm{SL}_2 (\CC )$ the irreducible representations are the $V(d):= \mathrm{Sym}^d (\CC^2)$. If $d_1$, $d_2$, $n$
are nonnegative integers such that $0\le n \le \mathrm{min} (d_1 , d_2)$, and if for $f\in V(d_1)$ and $g\in V(d_2)$ one
puts
\begin{gather*}
\psi_n( f,g) := \frac{(d_1-n) !}{d_1 !} \frac{(d_2-n) !}{d_2 !}\sum_{i=0}^n (-1)^i {n \choose i} \frac{\partial^n
f}{ \partial z_1^{n-i}\partial z_2^i} \frac{\partial^n g}{\partial z_1^i \partial z_2^{n-i} }
\end{gather*}
then the map $(f,g)\mapsto \psi_n(f,g)$ is a bilinear and
$\mathrm{SL}_2\, (\mathbb{C})$-equivariant map from $V(d_1)\times V(d_2)$ onto $V(d_1 +d_2 -2n)$. The map
\begin{gather*}
V(d_1)\otimes V(d_2)\to \bigoplus_{n=0}^{\mathrm{min}(d_1 , d_2)} V(d_1+d_2-2n) \\
(f,g) \mapsto \sum_{n=0}^{\mathrm{min}(d_1 , d_2)} \psi_n(f,g) 
\end{gather*}
is an isomorphism of $\mathrm{SL}_2\, (\mathbb{C})$-modules ("Clebsch-Gordan decomposition"), cf. \cite{B-S}, p. 122. The maps $\psi_n$ are
called transvectants (\emph{\"Uberschiebungen} in German). Their importance derives from the fact that they make the preceding isomorphism
\emph{explicit}.

Now let $G:= \mathrm{SL}_3 (\CC )$, and let  $V(a, \: b)$ be the irreducible $G$-module whose highest weight has numerical labels $a, \: b$
where
$a$, $b$ are non-negative integers. A representation $V(a, \: b)\otimes V(c, \: d)$ decomposes similarly into irreducible summands, and the
Cartan-Killing theory of highest weights allows us to compute the multiplicity with which $V(e, \:f)$ occurs (an entirely similar statement
holds of course for $\mathrm{SL}_n (\CC )$ or any semi-simple linear algebraic group); in other words, the theory of highest weights asserts the
existence of an isomorphism
\[
E\otimes E' \simeq \bigoplus_{i\in I} E_i
\]
of irreducible representations of a semi-simple algebraic group, but does not give us the isomorphism, at least it is not easy to unravel from
this theory. On the other hand, it is often important to know the isomorphism, e.g.
\begin{itemize}
\item
in \emph{the problem of rationality for fields of invariants}, see \cite{Dolg1} for a survey. Here one almost always has to check certain
nondegeneracy statements for maps of the form $E\otimes E' \to E''$ (or similar maps constructed by representation theory), and for this one
has to know the maps explicitly. Often one is dependent on computer aid when studying these maps, one needs fast methods for computing them.
\item
In \emph{the geometry of syzygies} (see \cite{Wey03}). Here one wants to understand differentials of certain chain complexes constructed by
representation theoretic means, as for example by Kempf's geometric technique based on taking direct images of Koszul complexes; here
computational efficiency is again one of the desiderata.
\end{itemize}
In the first sections of this article we give a very simple method, contained in Theorem \ref{tClebschGordan}, to obtain a basis for the space
\[
\mathrm{Hom}_G (V(a, \: b)\otimes V(c, \: d), \: V(e, \: f))\, .
\]
In particular, it enables one to immediately write down matrix representatives for the occurring maps. Moreover, Theorem \ref{tClebschGordan}
gives a factorization of all such maps into certain elementary building blocks and explicit formulas for them; during the proof,
which occupies sections 2 through 4, we also set up a natural bijection between the basis maps and the expansions of Young diagrams which 
occur in the combinatorics of the Littlewood-Richardson rule.\\
One is tempted to think that something of this sort should have been discovered before, but we could not find it in the classical or modern
literature.

In any event, for us the main reason for introducing this computational scheme is that it is the one we use and found most convenient for
applications to the problem of rationality for invariant function fields; a sample of such applications is contained in section 5.

First of
all, Theorem \ref{tClebschGordan} allows one to prove rationality for many spaces $\PP (V(a, \: b))/G$ via the double bundle method
(\cite{Bo-Ka}) where $V(a, \: b)$ is a space of mixed tensors. We prove rationality of $\PP (V(4, \: 4))/G$ as an example.

For the double bundle method one uses linear fibrations over projective spaces; one may also consider linear fibrations over more
general Grassmannians; see Proposition \ref{pFibrationOverGrass}. This was already mentioned in \cite{Shep89}, but has not yet found any
application to our knowledge. One problem is that one needs to know the stable rationality for quotients of Grassmannians $\mathrm{Grass}(k, \:
V)/\Gamma $ where $V$ is a linear representation of a reductive group $\Gamma$. In Proposition \ref{pStabRatGrassmannians} we give a criterion
for stable rationality that  applies in some cases if $\Gamma$ is a group of type $\mathrm{SL}$. Using this and Theorem \ref{tClebschGordan},
we prove the rationality of the moduli space of plane curves of degree $34$, i.e. $\PP (V(0,\: 34))/G$, in Theorem \ref{tRationalityV34}. This
case cannot be handled by the double bundle method, cf. Remark \ref{rNoAlternatives}, nor has it been treated by any other method so far.

\section{The Littlewood Richardson rule for $\slC$ }

It is well known that isomorphism classes of irreducible $\mathrm{GL}_n (\CC )$-modules correspond bijectively to $n$-tuples of integers
$\lambda =(\lambda_1,
\dots , \: \lambda_n)$ with $\lambda_1 \ge \lambda_2 \ge \dots \ge \lambda_n$ via associating to such a representation its highest weight
$\lambda_1
\epsilon_1 +
\dots + \lambda_n
\epsilon_n$ where $\epsilon_i$ is the $i$-th coordinate function of the standard diagonal torus in $\mathrm{GL}_n (\CC )$. The space of the
corresponding irreducible representation will be denoted $\Sigma^{\lambda } (\CC^n )$. Here $\Sigma^{\lambda }$ is called the \emph{Schur functor}
(cf. \cite{Fu-Ha}). If all $\lambda_j$ are non-negative, one associates to $\lambda$ the corresponding \emph{Young diagram} whose
number of boxes in its $i$-th row is $\lambda_i$; $\lambda$ will often be identified with this Young diagram. For example,
\setlength{\unitlength}{1cm}
\begin{center}
\begin{picture}(4,2)
\put(-3 , 1.5){$\Sigma^{1,1,1} (\CC^3)$}
\put(-1 , 1.5){$\longleftrightarrow$ }
\put(1, 1.5){$\Lambda^3 (\CC^3)$}
\put(3, 1.5){$\longleftrightarrow$}
\thicklines
\put(4.5,1.7){\framebox(0.4,0.4){}}
\put(4.5,1.3){\framebox(0.4,0.4){}}
\put(4.5,0.9){\framebox(0.4,0.4){}}
\end{picture}
\end{center}
We list some properties of the Schur functors for future use:
\begin{itemize}
\item
One has $\Sigma^{\lambda }(\CC^n )\simeq \Sigma^{\mu} (\CC^n)$ as $\mathrm{SL}_n (\CC)$-representations if and only if $\lambda_i -\mu_i =: h$ is
constant for all $i$. In fact, in this case
\[
\Sigma^{\lambda} (\CC^n) \simeq \Sigma^{\mu }(\CC^n) \otimes \left( \Lambda^n (\CC^n)  \right)^{\otimes h}\, .
\]
\item
$\Sigma^{(\lambda_1,\: \lambda_2, \dots, \: \lambda_n )} (\CC^n)^{\vee } \simeq \Sigma^{(-\lambda_n, \: -\lambda_{n-1}, \dots, \: -\lambda_1)}
(\CC^n )$.
\item
The representation $V(a, \: b)$ of $G=\mathrm{SL}_3 (\CC )$ is isomorphic to $\Sigma^{(a+b, \: b , \: 0)} (\CC^3)$.
\item
For a Young diagram $\lambda$ with more than $n$ rows one has $\Sigma^{\lambda } (\CC^n ) = 0$ by definition.
\end{itemize}

The Littlewood-Richardson rule to decompose $\Sigma^{\lambda }\otimes \Sigma^{\mu}$ into 
irreducible factors where $\lambda$, $\mu$ are Young diagrams (cf. \cite{Fu-Ha}, \S A.1) says the 
following (in this notation we suppress the space which the Schur functors are applied to, since it plays no role): label each box of $\mu$ with
the number of the row it belongs to. Then expand the Young  diagram $\lambda$ by adding the boxes of $\mu$ to the rows of
$\lambda$ subject to the following rules:
\begin{itemize}
\item[(a)]
The boxes with labels $\le i$ of $\mu$ together with the boxes of $\lambda$ form again a Young diagram;

\item[(b)]
No column contains boxes of $\mu$ with equal labels.

\item[(c)]
When the integers in the boxes added are listed from right to left and from top down, then, for any 
$0\le s \le$ (number of boxes of $\mu$), the first $s$ entries of the list satisfy: each label $l$ 
($1\le l\le$ (number of rows of $\mu$)$-1$ ) occurs at least as many times as the label $l+1$.

\end{itemize}
We will call this configuration of boxes (together with the labels) a $\mu$-\emph{expansion of} $\lambda$. 
Then the multiplicity of $\Sigma^{\nu}$ in $\Sigma^{\lambda}\otimes\Sigma^{\mu}$ is the number 
of times the Young diagram $\nu$ can be obtained by expanding $\lambda$ by $\mu$ according to 
the above rules, forgetting the labels.

\begin{example}
For $\Sigma^{(2,\: 1, \: 0)}\otimes \Sigma^{(2,\: 1, \: 0)}$ the following expansions are possible:
\setlength{\unitlength}{1cm}
\begin{center}
\begin{picture}(4,4.5)
\thicklines
\put(-4,4){\framebox(0.4,0.4){}}
\put(-3.6,4){\framebox(0.4,0.4){}}
\thinlines
\put(-3.2,4){\framebox(0.4,0.4){1}}
\put(-2.8,4){\framebox(0.4,0.4){1}}
\thicklines
\put(-4,3.6){\framebox(0.4,0.4){}}
\thinlines
\put(-3.6,3.6){\framebox(0.4,0.4){2}}

\thicklines
\put(-1,4){\framebox(0.4,0.4){}}
\put(-0.6,4){\framebox(0.4,0.4){}}
\thinlines
\put(-0.2,4){\framebox(0.4,0.4){1}}
\put(0.2,4){\framebox(0.4,0.4){1}}
\thicklines
\put(-1,3.6){\framebox(0.4,0.4){}}
\thinlines
\put(-1,3.2){\framebox(0.4,0.4){2}}

\thicklines
\put(2,4){\framebox(0.4,0.4){}}
\put(2.4,4){\framebox(0.4,0.4){}}
\thinlines
\put(2.8,4){\framebox(0.4,0.4){1}}
\thicklines
\put(2,3.6){\framebox(0.4,0.4){}}
\thinlines
\put(2.4,3.6){\framebox(0.4,0.4){1}}
\put(2.8,3.6){\framebox(0.4,0.4){2}}

\thicklines
\put(4.5,4){\framebox(0.4,0.4){}}
\put(4.9,4){\framebox(0.4,0.4){}}
\thinlines
\put(5.3,4){\framebox(0.4,0.4){1}}
\thicklines
\put(4.5,3.6){\framebox(0.4,0.4){}}
\thinlines
\put(4.9,3.6){\framebox(0.4,0.4){1}}
\put(4.5,3.2){\framebox(0.4,0.4){2}}

\thicklines
\put(-3,2){\framebox(0.4,0.4){}}
\put(-2.6,2){\framebox(0.4,0.4){}}
\thinlines
\put(-2.2,2){\framebox(0.4,0.4){1}}
\put(-3,1.2){\framebox(0.4,0.4){1}}
\thicklines
\put(-3,1.6){\framebox(0.4,0.4){}}
\thinlines
\put(-2.6,1.6){\framebox(0.4,0.4){2}}

\thicklines
\put(0,2){\framebox(0.4,0.4){}}
\put(0.4,2){\framebox(0.4,0.4){}}
\thinlines
\put(0.8,2){\framebox(0.4,0.4){1}}
\put(0,0.8){\framebox(0.4,0.4){2}}
\thicklines
\put(0,1.6){\framebox(0.4,0.4){}}
\thinlines
\put(0,1.2){\framebox(0.4,0.4){1}}

\thicklines
\put(3,2){\framebox(0.4,0.4){}}
\put(3.4,2){\framebox(0.4,0.4){}}
\thinlines
\put(3,1.2){\framebox(0.4,0.4){1}}
\thicklines
\put(3,1.6){\framebox(0.4,0.4){}}
\thinlines
\put(3.4,1.6){\framebox(0.4,0.4){1}}
\put(3.4,1.2){\framebox(0.4,0.4){2}}

\thicklines
\put(5.5,2){\framebox(0.4,0.4){}}
\put(5.9,2){\framebox(0.4,0.4){}}
\thinlines
\put(5.5,0.8){\framebox(0.4,0.4){2}}
\thicklines
\put(5.5,1.6){\framebox(0.4,0.4){}}
\thinlines
\put(5.9,1.6){\framebox(0.4,0.4){1}}
\put(5.5,1.2){\framebox(0.4,0.4){1}}

\end{picture}\end{center}
Hence we have the following decomposition
\[
V(1, \: 1) \otimes V(1, \: 1) = V(2, \: 2) \oplus V(3, \: 0) \oplus V(0, \: 3) \oplus 2 \, V(1,\: 1) \oplus V(0, \: 0)\, .
\]
\end{example}

\begin{figure}
\includegraphics*[width=8cm]{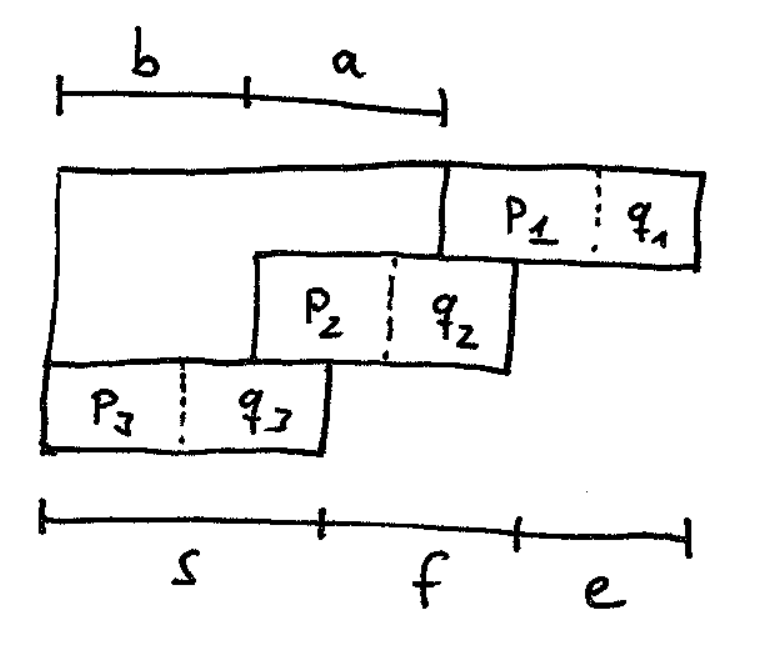}
\caption{}
\label{fpq}
\end{figure}

For $G=\slC$ the combinatorics of the Littlewood-Richardson rule can be handled explicitly. For this let
$V(e,f)$ be a summand of $V(a,b) \otimes V(c,d)$. In the following we set $\lambda = (a+b,b,0)$, $\mu = (c+d,d,0)$ and let $\nu = (e+f+s,f+s,s)$ be the unique Young diagram corresponding to $V(e,f)$ in the
decomposition of $\Sigma^\lambda (\CC^3) \otimes \Sigma^\mu (\CC^3)$.

\begin{lemma} \label{lpq}
Expand the Young diagram of $\lambda$ by adding $p_i$ boxes with label $1$ to row $i$ and afterwards
$q_i$ boxes with label $2$ to row $i$ (see Figure \ref{fpq}). This is a $\mu$-expansion of $\lambda$ if and only if the following inequalities hold:
\begin{enumerate}
	\item \begin{enumerate}
		\item $p_i \ge 0$ \xlabel{iPositivityP}
		\item $q_i \ge 0$ \xlabel{iPositivityQ}
		\end{enumerate}
	\item \begin{enumerate}
		\item $p_2 \le a$ \xlabel{iOverlap1row2}
		\item $p_3 \le b$ \xlabel{iOverlap1row3}			
		\item $p_2+q_2-a\le p_1$ \xlabel{iOverlap2row2}
		\item $p_3+q_3-b\le p_2$ \xlabel{iOverlap2row3}
		\end{enumerate}
	\item \begin{enumerate}		
		\item $q_1 = 0$ \xlabel{iString1}
		\item $q_2\le p_1$ \xlabel{iString2}
		\item $q_2+q_3\le p_1+p_2$ \xlabel{iString3}
		\end{enumerate}
	\item \begin{enumerate}
		\item $p_1+p_2+p_3 = c+d$ \xlabel{iTotal1}
		\item $q_1+q_2+q_3 = d$ \xlabel{iTotal2}
		\end{enumerate}
\end{enumerate}	
\end{lemma}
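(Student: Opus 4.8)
The plan is to unwind the three Littlewood--Richardson conditions (a), (b), (c) directly in terms of the integers $p_i,q_i$ and to check that the resulting system of inequalities is exactly (1)--(4). The key preliminary remark is that an expansion of this type is completely recorded by the six numbers $p_1,p_2,p_3,q_1,q_2,q_3$: since a Young diagram is left-justified, condition (a) with $i=1$ forces the $p_i$ boxes with label $1$ in row $i$ to fill exactly the columns $\lambda_i+1,\dots,\lambda_i+p_i$ (here $(\lambda_1,\lambda_2,\lambda_3)=(a+b,b,0)$), and then the requirement that the whole configuration again be a Young diagram puts the $q_i$ boxes with label $2$ in row $i$ into columns $\lambda_i+p_i+1,\dots,\lambda_i+p_i+q_i$; conversely every $\mu$-expansion of $\lambda$ lying in at most three rows comes from a unique such tuple (expansions using a fourth row are irrelevant, since $\Sigma^{\nu}(\CC^3)=0$ then). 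Granting this, \ref{iPositivityP} and \ref{iPositivityQ} are obvious, and \ref{iTotal1}, \ref{iTotal2} merely express that all boxes of $\mu$ are used, row $1$ of $\mu$ supplying $c+d$ boxes with label $1$ and row $2$ supplying $d$ boxes with label $2$ (row $3$ being empty).

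The next step translates (c) and (a). Reading the added boxes from right to left and top down produces the word $2^{q_1}1^{p_1}\,2^{q_2}1^{p_2}\,2^{q_3}1^{p_3}$, since within each row the label-$2$ boxes lie to the right of the label-$1$ boxes; requiring that every prefix contain at least as many $1$'s as $2$'s forces first $q_1=0$ (else the word begins with a $2$), which is \ref{iString1}, and then the only binding prefixes are those ending just after a block of $2$'s, giving exactly $q_2\le p_1$ and $q_2+q_3\le p_1+p_2$, i.e. \ref{iString2} and \ref{iString3}. For (a), the shape obtained from $\lambda$ by adjoining only the label-$1$ boxes has rows $(a+b+p_1,\,b+p_2,\,p_3)$, while the final shape $\nu$ has rows $(a+b+p_1+q_1,\,b+p_2+q_2,\,p_3+q_3)$; demanding that both be Young diagrams and substituting $q_1=0$ gives the four inequalities $p_2\le a+p_1$, $p_3\le b+p_2$, $p_2+q_2-a\le p_1$, and $p_3+q_3\le b+p_2+q_2$, the third of which is already \ref{iOverlap2row2}.

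Finally, condition (b) says that for each label the column-intervals occupied in the three rows are pairwise disjoint; for label $1$ these are $[a+b+1,a+b+p_1]$, $[b+1,b+p_2]$, $[1,p_3]$, and for label $2$ they are the empty interval in row $1$ (as $q_1=0$), $[b+p_2+1,b+p_2+q_2]$ and $[p_3+1,p_3+q_3]$, so after discarding the automatically disjoint pairs the content of (b) is the disjunctions ``$p_1=0$ or $p_2\le a$'', ``$p_2=0$ or $p_3\le b$'' and ``$q_2=0$ or $q_3=0$ or $p_3+q_3\le b+p_2$''. The one place any care is needed is to see that these disjunctions, combined with the inequalities from (a), collapse to the clean bounds \ref{iOverlap1row2}, \ref{iOverlap1row3}, \ref{iOverlap2row3}: one implication is immediate (using $p_i,q_i\ge 0$ and $q_1=0$), and for the other one argues by cases, e.g. if $p_2>a$ then the label-$1$ intervals of rows $1$ and $2$ overlap as soon as $p_1\ge 1$, so disjointness forces $p_1=0$, but then $p_2\le a+p_1=a$ by the first inequality from (a), contradicting $p_2>a$; hence $p_2\le a$. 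The bounds $p_3\le b$ and $p_3+q_3-b\le p_2$ follow in the same way, by checking the alternatives in the relevant disjunction against the inequalities $p_3\le b+p_2$ and $p_3+q_3\le b+p_2+q_2$. Assembling the translations of (a), (b), (c), a tuple $(p_i,q_i)$ describes a $\mu$-expansion of $\lambda$ if and only if it satisfies (1)--(4), which is the assertion of the lemma.
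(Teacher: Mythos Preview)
Your proof is correct and follows the same strategy as the paper: you translate each of the Littlewood--Richardson conditions (a), (b), (c) and the box-count constraint directly into conditions on the $p_i,q_i$, and verify that the resulting system is exactly (1)--(4). The paper's argument is a one-paragraph sketch that simply labels which inequalities encode which LR condition; you supply the missing verifications, in particular the case analysis showing that the disjunctions coming from condition~(b) collapse, in the presence of the Young-diagram inequalities from~(a), to the clean bounds $p_2\le a$, $p_3\le b$, $p_3+q_3-b\le p_2$.

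The only methodological difference is one of packaging. The paper groups (a) and (b) together for each label: ``$\lambda$ together with the label-$1$ boxes is a Young diagram \emph{and} no column contains two label-$1$ boxes'' is precisely the statement that the label-$1$ boxes form a horizontal strip on $\lambda$, and the standard interlacing characterisation of a horizontal strip ($\lambda_{i-1}\ge \lambda_i+p_i$) gives $p_2\le a$, $p_3\le b$ in one line; likewise the label-$2$ boxes form a horizontal strip on the intermediate shape, yielding (2c) and (2d) directly. You instead keep (a) and (b) separate, obtain the weaker inequalities $p_2\le a+p_1$, $p_3\le b+p_2$, $p_3+q_3\le b+p_2+q_2$ from (a) and the disjunctive constraints from (b), and then argue by contradiction that they combine to the listed inequalities. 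Both routes are valid; the horizontal-strip viewpoint is shorter, while yours makes the logical structure of the equivalence more explicit.
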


\begin{proof}
The inequalities (\ref{iPositivityP}) and (\ref{iPositivityQ}) are obvious positivity conditions. (\ref{iOverlap1row2}) and (\ref{iOverlap1row3}) ensure that the boxes of $\lambda$ together with the boxes of $\mu$ with label $1$ form again a Young diagram and there is at most one label $1$ in every column.  (\ref{iOverlap2row2}) and (\ref{iOverlap2row3}) guarantee that the boxes of $\lambda$ together with all boxes of $\mu$ form again a Young diagram and there is at most one label $2$ in every column.  (\ref{iString1}), (\ref{iString2}) and (\ref{iString3}) encode that the string of
labels read from right to left and from top down always contains more $1$'s then $2$'s. The last two equations reflect that the total number of $1$'s and $2$'s is given by the Young diagram describing $V(c,d)$.
\end{proof}

For given $a,b,c,d,e,f$ the equations above leave only one unknown:

\begin{lemma} \label{lOneUnknown}
Let $s=p_3+q_3$ be the number of labelled boxes in the third row of the $\mu$-expansion. Let furthermore $j = p_3$ be the number of $1$'s in the third row 
and $t = p_2-q_3$ the difference between the number of $1$'s in the second row and the number of $2$'s in the third row. With this we obtain
\begin{enumerate}
	\item $p_3 = j$ \label{ip3}
	\item $q_3 = s-j$ \label{iq3}
	\item $p_2 = s+t-j$ \label{ip2}
	\item $q_2 = d-s+j$ \label{iq2}
	\item $p_1 = c+d-(s+t)$ \label{ip1}
	\item $q_1= 0$ \label{iq1}
	\item $s = \frac{(a+c-e)+2(b+d-f)}{3}$ \label{is}
	\item $t = \frac{(a+c-e)-(b+d-f)}{3}$ \label{it}
\end{enumerate}
\end{lemma}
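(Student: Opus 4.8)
The plan is to treat this as a linear algebra problem: Lemma \ref{lpq} gave us the six unknowns $p_1,p_2,p_3,q_1,q_2,q_3$ subject to a system that includes several equalities, namely \eqref{iString1}, \eqref{iTotal1} and \eqref{iTotal2}. Together with the three quantities introduced in the statement --- $s = p_3+q_3$, $j = p_3$, $t = p_2 - q_3$ --- these give, at first glance, six linear equations in six unknowns, and I would simply solve the system. First I would record $q_1 = 0$ directly from \eqref{iString1}, which gives \eqref{iq1}. Then from $j = p_3$ I get \eqref{ip3}; from $s = p_3 + q_3 = j + q_3$ I solve $q_3 = s - j$, which is \eqref{iq3}; from $t = p_2 - q_3 = p_2 - (s-j)$ I solve $p_2 = s + t - j$, which is \eqref{ip2}. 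Next, \eqref{iTotal2} reads $q_1 + q_2 + q_3 = d$, i.e. $0 + q_2 + (s-j) = d$, giving $q_2 = d - s + j$, which is \eqref{iq2}. Finally \eqref{iTotal1} reads $p_1 + p_2 + p_3 = c + d$, i.e. $p_1 + (s+t-j) + j = c+d$, giving $p_1 = c + d - (s+t)$, which is \eqref{ip1}. So items \eqref{ip3}--\eqref{iq1} are pure bookkeeping and require no input beyond Lemma \ref{lpq} and the definitions.

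The substantive part is items \eqref{is} and \eqref{it}, which express $s$ and $t$ in terms of the data $a,b,c,d,e,f$. Here the extra input is the constraint coming from the \emph{shape} of $\nu$: recall $\nu = (e+f+s,\, f+s,\, s)$ is the Young diagram of $V(e,f)$ appearing in the decomposition, and the $\mu$-expansion produces $\nu$ from $\lambda = (a+b, b, 0)$ by adding, in row $i$, a total of $p_i + q_i$ boxes. Thus the three row-length equations are
\begin{align*}
(a+b) + p_1 + q_1 &= e + f + s,\\
b + p_2 + q_2 &= f + s,\\
0 + p_3 + q_3 &= s.
\end{align*}
The third is the definition of $s$ and carries no new information. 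I would substitute the already-derived expressions for the $p_i,q_i$ into the first two. The first becomes $(a+b) + (c+d-(s+t)) + 0 = e+f+s$, and the second becomes $b + (s+t-j) + (d-s+j) = f+s$. The second simplifies to $b + d + t = f + s$, i.e.
\begin{equation*}
s - t = b + d - f.
\end{equation*}
The first simplifies to $a + b + c + d - s - t = e + f + s$, i.e. $2s + t = a + b + c + d - e - f = (a+c-e) + (b+d-f)$. Adding $s - t = b+d-f$ to $2s + t = (a+c-e)+(b+d-f)$ gives $3s = (a+c-e) + 2(b+d-f)$, hence \eqref{is}; subtracting gives $3t = (a+c-e) - (b+d-f)$ (after one more line of arithmetic, using $2s+t - 2(s-t) = 3t$), hence \eqref{it}.

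I expect no genuine obstacle here --- the lemma is a determined linear system and the proof is a short computation --- so the only thing to be careful about is consistency: the system has more equations than unknowns once the $\nu$-shape constraints are added, so I should remark that the two row-length equations for rows $1$ and $2$ are compatible precisely because $e+f+s$, $f+s$, $s$ were defined using the \emph{same} $s = p_3 + q_3$, and that this is what forces $3 \mid (a+c-e) + 2(b+d-f)$ and $3 \mid (a+c-e) - (b+d-f)$ --- equivalently $a + b + c + d \equiv e + f \pmod 3$, which is just the statement that $V(e,f)$ can only occur in $V(a,b)\otimes V(c,d)$ when the triality (the weight lattice coset mod the root lattice) matches. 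I would close with a one-sentence remark to that effect, so that items \eqref{is}, \eqref{it} are manifestly integers whenever $V(e,f)$ is actually a summand.
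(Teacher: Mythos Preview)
Your proof is correct and follows essentially the same approach as the paper: items \eqref{ip3}--\eqref{iq1} are immediate from the definitions and the equalities in Lemma~\ref{lpq}, and items \eqref{is}--\eqref{it} come from the row-length constraints imposed by the shape $\nu$. The only cosmetic difference is that the paper obtains $s$ directly by counting the total number of boxes ($a+2b+c+2d = 3s+2f+e$) rather than solving the $2\times 2$ system from rows~1 and~2, but this is the same linear algebra.
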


\begin{proof}
(\ref{ip3}), (\ref{iq3}) and (\ref{ip2}) follow from the definition of $j$, $s$ and $t$. Since the total number
of $2$'s is $d$ we obtain (\ref{iq2}). Similarly $p_1+p_2+p_3 = c+d$ implies (\ref{ip1}). Equation (\ref{iq1}) is true for all $\mu$-expansions. Since we know that the number of labelled boxes  is $c+2d$, the number of empty boxes is $a+2b$ and the total number of boxes is $3s+2f+e$, we obtain
\[
	s = \frac{(a+c-e)+2(b+d-f)}{3}.
\]
Finally the total length of the first row is $a+b+p_1$, on the one hand, and $s+e+f$ on the other. This gives (\ref{it}).
\end{proof}

\begin{proposition}
For given $a$, $b$, $c$, $d$, $e$ and $f$ there exists a $\mu$-expansion of $\lambda$ of shape $\nu$ with $p_3=j$ if and only if $j$ satisfies the following inequalities:
\begin{enumerate}
	\item \begin{enumerate}	
		\item $0 \le j \le s+t \le c+d$
		\item $0 \le s-j \le d$
		\end{enumerate}
	\item \begin{enumerate}	
		\item[(a)] $s+t-j \le a$
		\item[(b)] $j \le b$
		\item[(d)] $j \le b+t$ 
		\end{enumerate}
	\item \begin{enumerate}	
		\item[(b)] $j \le c-t$
		\item[(c)] $j \le c$
		\end{enumerate}
\end{enumerate}
\end{proposition}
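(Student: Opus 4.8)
The plan is to deduce the Proposition directly from the two preceding lemmas by substitution. Fix an integer $j$. By Lemma~\ref{lOneUnknown} the numbers $s$ and $t$ are determined by $a,b,c,d,e,f$ through (\ref{is}) and (\ref{it}), and once $j$ is fixed as well, the six numbers $p_1,p_2,p_3,q_1,q_2,q_3$ are determined through (\ref{ip3})--(\ref{iq1}); these formulas were set up precisely so that the corresponding expansion of $\lambda$ has shape $\nu$ (using the identities $s+2t=a+c-e$ and $s-t=b+d-f$, hence $f+s=b+d+t$, which follow at once from (\ref{is}) and (\ref{it})). Hence there is at most one candidate $\mu$-expansion of $\lambda$ of shape $\nu$ with $p_3=j$, namely the one given by these formulas, and it is a genuine $\mu$-expansion exactly when the resulting tuple $(p_i,q_i)$ satisfies all of conditions (1)--(4) of Lemma~\ref{lpq}. (We assume tacitly that $s$ and $t$ are integers, i.e. that $(a+c-e)-(b+d-f)$ is divisible by $3$; this is necessary for any $\mu$-expansion to exist.) So the whole task is to rewrite the conditions of Lemma~\ref{lpq} in terms of $j$.

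First I would note that the equalities (\ref{iTotal1}) and (\ref{iTotal2}), and also (\ref{iString1}), become identities after substitution --- this is exactly how Lemma~\ref{lOneUnknown} was obtained --- so they impose nothing on $j$. For the remaining inequalities I would substitute and simplify one at a time. Positivity of the $p_i$ in (\ref{iPositivityP}) gives $j\ge 0$, $j\le s+t$ and $s+t\le c+d$, which is part (1)(a); positivity of the $q_i$ in (\ref{iPositivityQ}) gives $j\le s$ and $s-j\le d$, which is part (1)(b). Condition (\ref{iOverlap1row2}) is $s+t-j\le a$, i.e. (2)(a); (\ref{iOverlap1row3}) is $j\le b$, i.e. (2)(b); and (\ref{iOverlap2row3}), using $p_3+q_3=s$, becomes $j\le b+t$, i.e. (2)(d). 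Likewise (\ref{iString2}) becomes $j\le c-t$, i.e. (3)(b), and (\ref{iString3}), using $q_2+q_3=d$ and $p_1+p_2=c+d-j$, becomes $j\le c$, i.e. (3)(c).

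The single condition of Lemma~\ref{lpq} that does not reappear is (\ref{iOverlap2row2}): after substitution $p_2+q_2-a\le p_1$ simplifies to $s+2t\le a+c$, and since $s+2t=a+c-e$ this only says $e\ge 0$ and is vacuous. This also accounts for the gaps in the numbering of the Proposition --- part (2) retains the labels (a),(b),(d) of (\ref{iOverlap1row2}), (\ref{iOverlap1row3}), (\ref{iOverlap2row3}) because (\ref{iOverlap2row2})$\,=\,$(c) has vanished, and part (3) retains (b),(c) because (\ref{iString1})$\,=\,$(a) is automatic. Assembling the surviving inequalities gives precisely the list (1)--(3) of the Proposition.

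I do not anticipate any real obstacle; the proof is a bookkeeping exercise. The points requiring care are: checking that every condition of Lemma~\ref{lpq} has been used --- the equalities (\ref{iTotal1}), (\ref{iTotal2}) and (\ref{iString1}) become identities, (\ref{iOverlap2row2}) becomes the vacuous $e\ge 0$, and the remaining seven conditions yield parts (1)--(3) of the Proposition (with the two positivity conditions contributing (1)(a) and (1)(b)); correctly carrying the auxiliary identities $s+2t=a+c-e$ and $s-t=b+d-f$ obtained from (\ref{is}) and (\ref{it}); and recording the integrality of $s$ and $t$ as a standing hypothesis, since otherwise there is no $\mu$-expansion at all, regardless of $j$.
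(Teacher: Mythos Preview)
Your proof is correct and follows exactly the same approach as the paper's own proof: substitute the expressions of Lemma~\ref{lOneUnknown} into the conditions of Lemma~\ref{lpq}, observe that (\ref{iString1}), (\ref{iTotal1}), (\ref{iTotal2}) become identities and that (\ref{iOverlap2row2}) reduces to the vacuous $s+2t\le a+c$, and collect the surviving inequalities. Your write-up is in fact considerably more detailed than the paper's, which dispatches the matter in two sentences.
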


\begin{proof}
Substitute the expressions of Lemma \ref{lOneUnknown} into the inequalities of Lemma \ref{lpq}. The inequality (\ref{iOverlap2row2}) gives $s+2t \le a+c$ which is always true since $s+2t = a+c-e$. Furthermore (\ref{iString1}), (\ref{iTotal1}) and (\ref{iTotal2}) simplify to $0 = 0$.
\end{proof}

\begin{remark}
The numbering in the list above is taken from the corresponding inequalities in 
Lemma \ref{lpq}.
\end{remark}

\section{A basis for $\mathrm{Hom}_G \bigl(V(a, \: b)\otimes \: V(c, \: d), \: V(e, \: f)\bigr)$}
We put

\begin{gather*}
S^a := \mathrm{Sym}^a (\CC^3), \quad D^b := \mathrm{Sym}^b (\CC^3)^{\vee}
\end{gather*}

and denote by $e_1,\: e_2, \: e_3$ and $x_1, \: x_2, \: x_3$ dual bases in $\CC^3$ resp.
$(\CC^3)^{\vee}$ so that $V(a,\: b)$ can be realized concretely as the kernel of the map

\begin{gather}\label{formulaDelta}
\Delta := \sum_{i=1}^3 \frac{\partial}{\partial e_i}\otimes \frac{\partial}{\partial x_i} \, :\, S^a
\otimes D^b \to S^{a-1} \otimes D^{b-1}\, ;
\end{gather}

We will always view $V(a,\: b)$ in this way in the following. 
By $\pi_{e,\: f}$ we denote the equivariant projection from $S^e \otimes D^f$ onto $V(e, \: f)$. 

Our purpose is to determine an explicit basis of the
$G$-equivariant maps
\[
\mathrm{Hom}_G \bigl(V(a, \: b)\otimes \: V(c, \: d), \: V(e, \: f)\bigr)
\]
if $V(e, \: f)$ is a subrepresentation of $V(a, \: b) \otimes V(c, \: d)$ To this end we define the following elementary
maps:

\begin{gather*}
\alpha \, : \, (S^a \otimes D^b) \otimes (S^c \otimes D^d) \to (S^{a-1} \otimes D^b) \otimes (S^c \otimes D^{d-1})\\
\alpha := \sum_{i=1}^{3} \frac{\partial}{\partial e_i} \otimes \mathrm{id} \otimes \mathrm{id} \otimes \frac{\partial
}{\partial x_i}\nonumber
\end{gather*} 

\begin{gather*}
\beta \, : \, (S^a \otimes D^b) \otimes (S^c \otimes D^d) \to (S^{a} \otimes D^{b-1}) \otimes (S^{c-1} \otimes D^{d})\\
\beta := \sum_{i=1}^{3} \mathrm{id} \otimes \frac{\partial}{\partial x_i}  \otimes
\frac{\partial }{\partial e_i} \otimes \mathrm{id} \nonumber
\end{gather*} 

\begin{gather*}
\vartheta \, : \, (S^a \otimes S^c) \otimes (D^{b+d}) \to (S^{a-1} \otimes S^{c-1}) \otimes D^{b+d+1}\\
\vartheta := \sum_{\sigma\in\mathfrak{S}_3} (-1)^{\mathrm{sgn}(\sigma )}\frac{\partial}{\partial e_{\sigma (1)}}\otimes \frac{\partial }{\partial
e_{\sigma (2)}}
\otimes x_{\sigma (3)} 
\nonumber
\end{gather*}

\begin{gather*} 
\omega \, : \, S^{a+c} \otimes (D^{b}\otimes D^d) \to S^{a+c+1} \otimes (D^{b-1} \otimes D^{d-1})\\
\omega := \sum_{\sigma \in \mathfrak{S}_3} (-1)^{\mathrm{sgn}(\sigma )} e_{\sigma (1)} \otimes \frac{\partial}{\partial x_{\sigma (2)}}\otimes
\frac{\partial }{\partial x_{\sigma (3)}}  
\nonumber
\end{gather*}

Note that an easier way of defining $\vartheta$ and $\omega$ is by saying that $\vartheta$ is multiplication by the
determinant $x_1\wedge x_2 \wedge x_3$ and $\omega$ multiplication by its inverse $e_1 \wedge e_2 \wedge e_3$.

\begin{theorem}\xlabel{tClebschGordan}
Suppose that $V(e, \: f)$ occurs in the decomposition of $V(a,\: b)\otimes V(c,\: d)$ and let $s$ and
$t$ be defined as above. Let $J$ be the set of all integers $j$ satisfying the inequalities
\begin{enumerate}
	\item \begin{enumerate}	
		\item $0 \le j \le s+t \le c+d$
		\item $0 \le s-j \le d$
		\end{enumerate}
	\item \begin{enumerate}	
		\item[(a)] $s+t-j \le a$
		\item[(b)] $j \le b$
		\item[(d)] $j \le b+t$ 
		\end{enumerate}
	\item \begin{enumerate}	
		\item[(b)] $j \le c-t$
		\item[(c)] $j \le c$
		\end{enumerate}
\end{enumerate}
Then a basis of $\mathrm{Hom}_G (V(a,\: b) \otimes V(c, \: d), \: V(e, \: f))$ is given by the restriction to 
$V(a,\: b) \otimes V(c,\:d)$ of the maps
\[
	\pi_{e,\: f}\circ \vartheta^t \circ \beta^{j} \circ \alpha^{s-j}, \quad j\in J
\]
if $t\ge0$ and
\[
	\pi_{e,\: f}\circ \omega^{-t} \circ \beta^{j} \circ \alpha^{s+t-j}, \quad j \in J
\]
if $t \le 0$.
\end{theorem}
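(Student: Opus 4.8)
The plan is to reduce the theorem to a dimension count together with a linear-independence statement, and then to prove the latter by evaluating the candidate maps on suitable vectors. By complete reducibility, $\dim_{\CC}\mathrm{Hom}_{G}\bigl(V(a,b)\otimes V(c,d),V(e,f)\bigr)$ equals the multiplicity of $V(e,f)$ in $V(a,b)\otimes V(c,d)$, and by the Littlewood--Richardson rule together with the Proposition of Section 2 this multiplicity is the number of integers $j$ satisfying the listed inequalities, that is, $|J|$. Since the theorem exhibits exactly $|J|$ maps, it suffices to prove two things: (i) each displayed expression defines a $G$-equivariant map $V(a,b)\otimes V(c,d)\to V(e,f)$; and (ii) these $|J|$ maps are linearly independent. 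Given (i) and (ii) they form a basis automatically, and in particular none of them is zero.

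For (i), observe that $\alpha,\beta$ are equivariant index contractions and $\vartheta,\omega$ are multiplication by the invariant determinants $x_{1}\wedge x_{2}\wedge x_{3}$ and $e_{1}\wedge e_{2}\wedge e_{3}$, so these maps, together with $\pi_{e,f}$ and the evident multiplication maps $\mathrm{Sym}^{p}\otimes\mathrm{Sym}^{q}\to\mathrm{Sym}^{p+q}$ needed to regroup the tensor factors, are all $G$-equivariant. It then remains to match bidegrees: writing $X:=a+c-e$ and $Y:=b+d-f$, the formulas of Lemma \ref{lOneUnknown} give $s+2t=X$ and $s-t=Y$, and a short bookkeeping shows that, once the leftover symmetric-power factors are multiplied out, $\vartheta^{t}\circ\beta^{j}\circ\alpha^{s-j}$ (resp.\ $\omega^{-t}\circ\beta^{j}\circ\alpha^{s+t-j}$) carries $V(a,b)\otimes V(c,d)$ into $S^{e}\otimes D^{f}$, so that $\pi_{e,f}$ applies. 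Moreover every intermediate symmetric-power exponent arising in the composite is non-negative as a direct consequence of the inequalities defining $J$ --- for instance $j\le c$ is the non-negativity of the factor $S^{c-j}$ left by $\beta^{j}$, $s+t-j\le a$ that of $S^{a-s-t+j}$, $j\le b$ resp.\ $j\le b+t$ that of $D^{b-j}$ resp.\ $D^{b-j+t}$, and $j\le c-t$ that of $S^{c-j-t}$ --- while $t\ge0$ resp.\ $-t\ge0$ is the standing case hypothesis.

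The substance is (ii), and it is also where the main obstacle lies. Write $\Phi_{j}$ for the $j$-th map. For each $j\in J$ I would write down an explicit vector $v_{j}\in V(a,b)\otimes V(c,d)=\ker\Delta\otimes\ker\Delta$, most conveniently a highest-weight vector of weight $(e,f)$, built as a product of powers of the coordinate functions $e_{1},e_{3},x_{1},x_{3}$ and of the two determinants and chosen to mirror the $j$-th $\mu$-expansion of $\lambda$; this choice is simultaneously what pins down the bijection between the basis maps and the Young-diagram expansions promised in the introduction. Composing with the linear functional $\xi$ on $S^{e}\otimes D^{f}$ that reads off the coefficient of the highest-weight monomial $e_{1}^{e}x_{3}^{f}$ --- a functional nonzero on the highest-weight line $\CC\cdot e_{1}^{e}x_{3}^{f}\subset V(e,f)$, hence detecting non-vanishing of highest-weight vectors --- one obtains a square matrix $M=\bigl(\xi(\Phi_{j'}(v_{j}))\bigr)_{j,j'\in J}$; invertibility of $M$ yields (ii) at once, since a relation $\sum_{j'}c_{j'}\Phi_{j'}=0$ forces $Mc=0$. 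Now $\alpha$ and $\beta$ act on such monomial tensors by the Leibniz rule with explicit multinomial coefficients and $\vartheta,\omega$ merely append and re-contract determinant factors, so tracking weights and degrees makes most entries of $M$ vanish; the hard part is to order $J\subset\ZZ$ so that $M$ becomes triangular with non-vanishing diagonal --- equivalently, to show that $\Phi_{j}(v_{j})$ is visibly a nonzero multiple of $e_{1}^{e}x_{3}^{f}$ whereas $\Phi_{j'}(v_{j})$, for $j'$ on the wrong side of $j$, is either annihilated by $\pi_{e,f}$ or carries no $e_{1}^{e}x_{3}^{f}$-component. This triangularity is precisely the computation that the Littlewood--Richardson combinatorics of Section 2 is designed to control. (An alternative, more representation-theoretic route would identify the $\Phi_{j}$ with explicit elements of the space of $\mathrm{SL}_{3}$-invariants in $V(a,b)\otimes V(c,d)\otimes V(f,e)$ and appeal to the first and second fundamental theorems for $\mathrm{SL}_{3}$; but the evaluation argument above is more self-contained and matches the Young-diagram framework used throughout the paper.)
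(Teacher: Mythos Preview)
Your overall architecture (dimension count via Littlewood--Richardson, then linear independence) is the right one, and your verification (i) of equivariance and bidegrees is fine. The gap is in (ii): you do not actually prove linear independence, you only outline a scheme---choose a highest-weight vector $v_j$ for each $j$, read off the coefficient of $e_1^e x_3^f$, and hope the resulting matrix is triangular---and then explicitly label the triangularity as ``the hard part'' without doing it. Constructing the $v_j$ explicitly is itself nontrivial (highest-weight vectors in a tensor product are not simply monomials), and your assertion that ``the Littlewood--Richardson combinatorics of Section 2 is designed to control'' the triangularity is not substantiated by anything in Section 2. As it stands the proof is incomplete at exactly the point that carries all the content.

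The paper's argument avoids this machinery entirely, and it is worth seeing why. Instead of a family $\{v_j\}$, one evaluates \emph{all} the maps on the \emph{single} vector
\[
m=(e_1^{a}\otimes x_3^{b})\otimes(e_3^{c}\otimes x_1^{d})\in V(a,b)\otimes V(c,d),
\]
which is visibly in $\ker\Delta\otimes\ker\Delta$. Because $m$ is a pure monomial in which the $e$'s and the $x$'s never share an index, each operator $\alpha,\beta,\vartheta,\omega$ acts on $m$ in only one way, so (for $t\ge 0$, say)
\[
(\vartheta^{t}\beta^{j}\alpha^{s-j})(m)=(\text{nonzero constant})\cdot e_1^{\,a-s+j-t}e_3^{\,c-j-t}\otimes x_3^{\,b-j}x_2^{\,t}x_1^{\,d-s+j},
\]
and similarly for $t\le 0$. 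These are honest monomials in $S^e\otimes D^f$; they are pairwise distinct because their $e_1$-degree is a strictly monotone function of $j$, hence linearly independent in $S^e\otimes D^f$. To pass to $V(e,f)$ one observes that the complement $\mathrm{im}(\delta)=(e_1x_1+e_2x_2+e_3x_3)\cdot(S^{e-1}\otimes D^{f-1})$ contains no nonzero element free of $e_2x_2$; since none of the monomials above is divisible by $e_2\otimes x_2$, a linear combination vanishing after $\pi_{e,f}$ would already vanish in $S^e\otimes D^f$, which is impossible. Thus independence follows from two one-line observations rather than from a triangular-matrix argument.
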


A few explanatory remarks are in order.

\begin{remark}\xlabel{rTacitMultiplication}
When writing a composition like $\pi_{e,\:f}\circ \vartheta^t \circ \beta^{j} \circ \alpha^{s-j}$, we suppress the
obvious multiplication maps from the notation. For example if $t\ge0$ the map
\[
\beta^{j} \circ \alpha^{s-j}\, :\, (S^a\otimes D^b) \otimes (S^{c}\otimes D^{d} ) \to (S^{a-s+j}\otimes D^{b-j})
\otimes (S^{c-j} \otimes D^{d-s+j} )
\]
is composed with the multiplication map
\[
(S^{a-s+j}\otimes D^{b-j})
\otimes (S^{c-j} \otimes D^{d-s+j} ) \to S^{a-s+j} \otimes S^{c-j} \otimes D^{b+d-s}
\]
before applying $\vartheta^t$ to land in $S^{a-s+j-t} \otimes S^{c-j-t} \otimes D^{b+d-s+t}$. Before applying the equivariant
projection $\pi_{e,\: f}$ we multiply again to map to
\[
S^{a+c-s-2t}\otimes D^{b+d-s+t} 
\]
which one, looking back at the definition of $t$ and $s$, identifies as $S^e\otimes D^f$. This simplification of notation
should cause no confusion.
\end{remark} 

\begin{proof}
Note that the element $m := (e_1^a\otimes x_3^b) \otimes (e_3^c\otimes x_1^d)$ is in the subspace $V(a, \: b ) \otimes V(c,
\: d)
\subset (S^a \otimes D^b) \otimes (S^c \otimes D^d)$ by the definition of $\Delta$ in formula (\ref{formulaDelta}). Note also that the image of 
the
map
\begin{gather}\label{formuladelta}
\delta \, :\, S^{e-1} \otimes D^{f-1} \to S^e \otimes D^f, \quad \delta = \sum_{i=1}^{3} e_i \otimes x_i
\end{gather}
is a complement to the subspace $V(e, \: f)$ in $S^e \otimes D^f$. 
If $t\ge0$ we compute
\[
(\vartheta^t \circ \beta^{j}\circ \alpha^{s-j}) (m) = (\mathrm{nonzero}\;\mathrm{constant}) \cdot e_1^{a-s+j-t}e_3^{c-j-t}\otimes x_3^{b-j}x_2^t
x_1^{d-s+j}.
\]
The inequalities above imply that this is a non-zero monomial in $S^e\otimes D^f$ for all $j\in J$.
If $t\le 0$ then 
\[
(\omega^{-t}\circ \beta^{j}\circ \alpha^{s+t-j}) (m) = (\mathrm{nonzero}\;\mathrm{constant}) \cdot e_1^{a-s-t+j} e_2^{-t} e_3^{c-j} \otimes
x_3^{b+t-j} x_1^{d-s+j}\, 
\]
is also a non-zero monomial in $S^e \otimes D^f$. 
Each nonzero bihomogeneous polynomial in the subspace \[ \mathrm{im} (\delta ) = (e_1\otimes x_1 +
e_2\otimes x_2 + e_3\otimes x_3)\cdot (S^{e-1}\otimes D^{f-1}) \subset S^e \otimes D^f \] contains monomials (with nonzero coefficient) divisible
by
$e_2\otimes x_2$. Since the preceding monomials in cases $t\ge 0$ resp. $t\le 0$ are not divisible by $e_2\otimes x_2$, a linear combination of
them can be zero modulo $\mathrm{im} (\delta )$ only if this linear combination is already zero as a polynomial in $S^e\otimes D^f$. But in both
cases $t\ge 0$ and $t\le 0$, the degrees of the above monomials with respect to the variable $e_1$ are pairwise distinct, so they cannot combine
to zero nontrivially in
$S^e\otimes D^f$.
\end{proof}

\section{Equivariant projections}

To complete the picture, we will give in this section a method to compute the equivariant projection
\[
\pi_{a, \: b} \, :\, S^a \otimes D^b \to V(a, \: b) = \mathrm{ker}(\Delta )\subset S^a \otimes D^b \, .
\]

\begin{lemma}\xlabel{lPolynomialNature}
One has
\[
\pi_{a, \: b} = \sum_{j=0}^N \mu_j \delta^j \Delta^j
\]
for some $N\in\mathbb{N}$ and certain $\mu_j\in\QQ$ (the map $\delta$ is defined in formula \ref{formuladelta}). 
\end{lemma}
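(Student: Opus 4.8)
The plan is to realise $\pi_{a,\: b}$ as a polynomial, with rational coefficients, in the single $G$-equivariant endomorphism $T:=\delta\circ\Delta$ of $S^a\otimes D^b$, and then to trade the powers $T^j=(\delta\Delta)^j$ for the operators $\delta^j\Delta^j$. The first ingredient is the commutation relation
\[
\Delta\circ\delta-\delta\circ\Delta=(p+q+3)\,\mathrm{id}\qquad\text{on }S^p\otimes D^q,
\]
which one checks directly using the Euler operators $\sum_i e_i\partial_{e_i}$ and $\sum_i x_i\partial_{x_i}$. Note also that $\delta$ is injective, being multiplication by $e_1x_1+e_2x_2+e_3x_3\neq 0$ in the integral domain $\CC[e_1,e_2,e_3,x_1,x_2,x_3]$. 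Hence, iterating the splitting $S^e\otimes D^f=\mathrm{ker}\,\Delta\oplus\mathrm{im}\,\delta$ used in the proof of Theorem~\ref{tClebschGordan}, we obtain a $G$-stable decomposition
\[
S^a\otimes D^b=\bigoplus_{k=0}^{m}W_k,\qquad W_k:=\delta^k\bigl(\mathrm{ker}\bigl(\Delta\colon S^{a-k}\otimes D^{b-k}\to S^{a-k-1}\otimes D^{b-k-1}\bigr)\bigr),
\]
where $m=\min(a,b)$ and $W_0=V(a,\: b)=\mathrm{ker}\,\Delta$; moreover $\delta^k$ identifies $W_k$ with $V(a-k,b-k)$, so these summands are pairwise non-isomorphic and $V(a,\: b)$ occurs in $S^a\otimes D^b$ with multiplicity one.

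Next I would compute how $T$ acts on each $W_k$. Using the commutation relation, a short induction on $r$ shows that for $w\in\mathrm{ker}\,\Delta\cap(S^{a-k}\otimes D^{b-k})$ one has $\Delta(\delta^r w)=\bigl(\sum_{s=1}^{r}(a+b-2k+2s+1)\bigr)\delta^{r-1}w$; at $r=k$ this gives $\Delta(\delta^k w)=k(a+b-k+2)\,\delta^{k-1}w$, and therefore $T|_{W_k}=c_k\cdot\mathrm{id}$ with $c_k:=k(a+b-k+2)$. Since $c_{k+1}-c_k=a+b+1-2k>0$ for $0\le k\le m-1$, the scalars $c_0=0,c_1,\dots,c_m$ are pairwise distinct, with $c_1,\dots,c_m$ positive integers. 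Hence $T$ is diagonalisable, its eigenspaces are the $W_k$, and $\mathrm{ker}\,T=W_0=\mathrm{ker}\,\Delta=V(a,\: b)$.

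It follows that the spectral projector onto the $0$-eigenspace of $T$, namely $\prod_{k=1}^m(\mathrm{id}-c_k^{-1}T)$, is a $G$-equivariant idempotent whose image is $V(a,\: b)$ and whose kernel is $\bigoplus_{k\ge 1}W_k$; by the multiplicity-one statement above this projector is $\pi_{a,\: b}$. Expanding the product and using that the $c_k$ are rational, we get $\pi_{a,\: b}=\sum_{j=0}^m\nu_j\,(\delta\Delta)^j$ with $\nu_j\in\QQ$. Finally, the commutation relation gives, by induction on $i$, an identity $\Delta^i\circ\delta=\delta\circ\Delta^i+d_i\,\Delta^{i-1}$ on each $S^p\otimes D^q$ with $d_i\in\ZZ$ depending only on $i,p,q$, whence $(\delta^i\Delta^i)(\delta\Delta)=\delta^{i+1}\Delta^{i+1}+d\,\delta^i\Delta^i$ for a suitable integer $d$. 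Thus $\mathrm{span}_{\QQ}\{\delta^i\Delta^i:0\le i\le j\}$ is stable under right multiplication by $\delta\Delta$ and contains $\mathrm{id}$, so it contains $(\delta\Delta)^j$ for every $j$; substituting into the previous display yields $\pi_{a,\: b}=\sum_{j=0}^N\mu_j\,\delta^j\Delta^j$ with $N=m=\min(a,b)$ and $\mu_j\in\QQ$, as claimed.

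I expect the only point needing real care to be the structural input of the first two paragraphs: that it is $\delta\Delta$ (and not $\Delta\delta$) which is diagonalisable with $0$-eigenspace exactly $\mathrm{ker}\,\Delta$ — that is, the decomposition of $S^a\otimes D^b$ into the ``strings'' $W_k$ and the evaluation of the eigenvalues $c_k$. Everything after that is the Lagrange interpolation trick together with the elementary bookkeeping needed to push all the $\delta$'s to the left of all the $\Delta$'s.
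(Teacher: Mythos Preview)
Your argument is correct but proceeds along a different line from the paper. The paper's proof is a short induction on $\min(a,b)$: writing $\pi_{a,b,i}$ for the equivariant projection $S^a\otimes D^b\to V(a-i,b-i)\subset S^a\otimes D^b$, Schur's lemma gives $\pi_{a,b,i}=\lambda_i\,\delta^i\,\pi_{a-i,b-i}\,\Delta^i$ for some nonzero scalar $\lambda_i$, and combining this with $\pi_{a,b}=\mathrm{id}-\sum_{i\ge 1}\pi_{a,b,i}$ and the inductive hypothesis (together with the trivial identity $\delta^i(\delta^j\Delta^j)\Delta^i=\delta^{i+j}\Delta^{i+j}$) immediately yields the claim. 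By contrast, you carry out an explicit spectral analysis: you diagonalise $T=\delta\Delta$ on the string decomposition $\bigoplus_k W_k$ with eigenvalues $c_k=k(a+b-k+2)$, identify $\pi_{a,b}$ with the Lagrange interpolant $\prod_{k\ge 1}(1-c_k^{-1}T)$, and then use the commutation relation $[\Delta,\delta]=(p+q+3)\,\mathrm{id}$ to rewrite $(\delta\Delta)^j$ in terms of the $\delta^i\Delta^i$. Your route is longer but more explicit: it actually produces the eigenvalues and, in principle, closed formulas for the $\mu_j$, whereas the paper's argument is purely structural and defers the determination of the constants to the recursion $\lambda_i^{-1}(e_1^{a-i}\otimes x_3^{b-i})=\Delta^i\delta^i(e_1^{a-i}\otimes x_3^{b-i})$ stated afterwards. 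Both approaches ultimately rest on the same decomposition $S^a\otimes D^b=\bigoplus_k \delta^k V(a-k,b-k)$, but the paper invokes it only implicitly via Schur's lemma, while you make it do all the work.
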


\begin{proof}
Let us denote by $\pi_{a,\: b, \: i}$ the equivariant projection
\[
\pi_{a, \: b, \: i}\, :\, S^a \otimes D^b \to V(a-i, \: b-i) \subset S^a \otimes D^b
\]
so that $\pi_{a, \: b} = \pi_{a, \: b, \: 0}$. Look at the diagram
\xycenter{
S^a\otimes D^b \ar[r]^{\Delta^i} &S^{a-i}\otimes D^{b-i} \ar[d]^{\pi_{a-i,\: b-i}}\\
&V(a-i, \: b-i)\subset S^{a-i} \otimes D^{b-i} \ar[lu]^{\delta^i}
	}
By Schur's lemma,
\begin{gather}\label{formulaLambdai}
\pi_{a, \: b, \: i} = \lambda_i \delta^i \pi_{a-i, \: b-i} \Delta^i
\end{gather}
for some nonzero constants $\lambda_i$. On the other hand,
\[
\pi_{a, \: b} = \mathrm{id} - \sum_{i=1}^{\mathrm{min}(a, \: b)} \pi_{a, \: b, \: i} \, .
\]
Therefore, since the assertion of the Lemma holds trivially if one of $a$ or $b$ is zero, the general case follows by induction on $\min(a,b)$.
\end{proof}

Note that to compute the $\mu_j$ in the expression of $\pi_{a, \: b}$ in Lemma \ref{lPolynomialNature}, it suffices to calculate the $\lambda_i$ in
formula \ref{formulaLambdai} which can be done by the rule
\[
\frac{1}{\lambda_i} (e_1^{a-i} \otimes x_3^{b-i}) = \left( 
\Delta^i \circ \delta^{i}\right) (e_1^{a-i} 
\otimes x_3^{b-i})\, 
\]
which uses (\ref{formulaLambdai}) and the injectivitiy of $\delta^i$.

\section{Examples and applications}

In the following example we write down explicit matrix representatives for the maps given in \ref{tClebschGordan} in one special case.

\begin{example}\xlabel{eMatricesV(1,1)}
In the decomposition of $V(1,\: 1) \otimes V(1, \: 1)$, the representation $V(1,\: 1)$ occurs with multiplicity $2$, corresponding to a two
dimensional space
\[
V(1, \: 1) \otimes V(1,\: 1) \to V(1, \: 1)
\]
of $\mathrm{SL}_3 (\CC )$-equivariant maps. Here $a=b=c=d=e=f=s=1$ and $t=0$. Therefore a basis
for this space of equivariant homomorphisms is given by $\alpha$ and $\beta$. 

To give matrix representatives of $\alpha$ and $\beta$ we use  the vectors
\begin{gather*}
{q}_{12}={e}_{1} {x}_{2}, \,
{q}_{13}={e}_{1} {x}_{3} \\
{q}_{21}={e}_{2} {x}_{1}, \,
{q}_{23}={e}_{2} {x}_{3}\\
{q}_{31}={e}_{3} {x}_{1}, \,
{q}_{32}={e}_{3} {x}_{2}\\
{q}_{22}={e}_{1} {x}_{1}-{e}_{2} {x}_{2}, \,
{q}_{33}={e}_{1} {x}_{1}-{e}_{3} {x}_{3}\\
\end{gather*}
(in this order) as a basis of the $8$-dimensional space $V(1,\: 1)$. Using the definition of $\alpha$ and $\beta$ we obtain:
\begin{align*}
\alpha &= \left(\begin{smallmatrix}0&
      0&
      -\frac{2}{3} {q}_{22}+\frac{1}{3} {q}_{33}&
      0&
      {q}_{32}&
      0&
      {q}_{12}&
      {q}_{12}\\
      0&
      0&
      {q}_{23}&
      0&
      \frac{1}{3} {q}_{22}-\frac{2}{3} {q}_{33}&
      0&
      {q}_{13}&
      {q}_{13}\\
      \frac{1}{3} {q}_{22}+\frac{1}{3} {q}_{33}&
      0&
      0&
      0&
      0&
      {q}_{31}&
      {-{q}_{21}}&
      0\\
      {q}_{13}&
      0&
      0&
      0&
      0&
      \frac{1}{3} {q}_{22}-\frac{2}{3} {q}_{33}&
      {-{q}_{23}}&
      0\\
      0&
      \frac{1}{3} {q}_{22}+\frac{1}{3} {q}_{33}&
      0&
      {q}_{21}&
      0&
      0&
      0&
      {-{q}_{31}}\\
      0&
      {q}_{12}&
      0&
      -\frac{2}{3} {q}_{22}+\frac{1}{3} {q}_{33}&
      0&
      0&
      0&
      {-{q}_{32}}\\
      {-{q}_{12}}&
      0&
      {q}_{21}&
      0&
      {q}_{31}&
      {-{q}_{32}}&
      -\frac{1}{3} {q}_{22}+\frac{2}{3} {q}_{33}&
      \frac{1}{3} {q}_{22}+\frac{1}{3} {q}_{33}\\
      0&
      {-{q}_{13}}&
      {q}_{21}&
      {-{q}_{23}}&
      {q}_{31}&
      0&
      \frac{1}{3} {q}_{22}+\frac{1}{3} {q}_{33}&
      \frac{2}{3} {q}_{22}-\frac{1}{3} {q}_{33}\\
      \end{smallmatrix}\right)\\
\beta &= \left(\begin{smallmatrix}0&
      0&
      \frac{1}{3} {q}_{22}+\frac{1}{3} {q}_{33}&
      {q}_{13}&
      0&
      0&
      {-{q}_{12}}&
      0\\
      0&
      0&
      0&
      0&
      \frac{1}{3} {q}_{22}+\frac{1}{3} {q}_{33}&
      {q}_{12}&
      0&
      {-{q}_{13}}\\
      -\frac{2}{3} {q}_{22}+\frac{1}{3} {q}_{33}&
      {q}_{23}&
      0&
      0&
      0&
      0&
      {q}_{21}&
      {q}_{21}\\
      0&
      0&
      0&
      0&
      {q}_{21}&
      -\frac{2}{3} {q}_{22}+\frac{1}{3} {q}_{33}&
      0&
      {-{q}_{23}}\\
      {q}_{32}&
      \frac{1}{3} {q}_{22}-\frac{2}{3} {q}_{33}&
      0&
      0&
      0&
      0&
      {q}_{31}&
      {q}_{31}\\
      0&
      0&
      {q}_{31}&
      \frac{1}{3} {q}_{22}-\frac{2}{3} {q}_{33}&
      0&
      0&
      {-{q}_{32}}&
      0\\
      {q}_{12}&
      {q}_{13}&
      {-{q}_{21}}&
      {-{q}_{23}}&
      0&
      0&
      -\frac{1}{3} {q}_{22}+\frac{2}{3} {q}_{33}&
      \frac{1}{3} {q}_{22}+\frac{1}{3} {q}_{33}\\
      {q}_{12}&
      {q}_{13}&
      0&
      0&
      {-{q}_{31}}&
      {-{q}_{32}}&
      \frac{1}{3} {q}_{22}+\frac{1}{3} {q}_{33}&
      \frac{2}{3} {q}_{22}-\frac{1}{3} {q}_{33}\\
      \end{smallmatrix}\right)
\end{align*}

Notice that $\alpha = \beta^t$.
\end{example}

Theorem \ref{tClebschGordan} is of particular importance in applications to the question of rationality of quotient spaces $\PP (V(a,\: b))/G$.
In the following, if a linear algebraic group $\Gamma$ acts on a variety $X$, the quotient $X/\Gamma$ is always taken in the sense of
Rosenlicht: there is a non-empty $\Gamma$-invariant open subset $U\subset X$ for which a geometric quotient $U/\Gamma$ exists, and $X/\Gamma$
denotes any birational model for this quotient.\\
We need the following extension of the double bundle method of \cite{Bo-Ka}, see also \cite{Shep89}.

\begin{proposition}\xlabel{pFibrationOverGrass}
Let $V$ and $W$ be representations of a connected reductive group $\Gamma$ with $\dim V - \dim W =:k > 0$. Let $U$ be a subrepresentation of
$\mathrm{Hom}(V, \: W)$ such that for generic $u\in U$ the corresponding map in $\mathrm{Hom}(V, \: W)$ has full rank so that we get a rational
map
\begin{align*}
\varphi\, :\, \PP (U) &\dasharrow \mathbb{G}:=\mathrm{Grass}(k,\: V)\\
  [u]&\mapsto \mathrm{ker}(u) \, .
\end{align*}
Let us assume furthermore that
\begin{itemize}
\item
$\varphi$ is dominant; this is equivalent to saying that a fibre $\varphi^{-1}(\varphi ([u]))$ has dimension $\dim \PP (U) - \dim \mathbb{G}$.
\item
$\mathbb{G}/\Gamma$ is stably rational in the sense that $(\mathbb{G}/\Gamma)\times\PP^r$ is rational for some $r\le
\dim \PP (U ) - \dim \mathbb{G}$.
\item
Let $Z$ be the kernel of the action of $\Gamma$ on $\mathbb{G}$: we require the existence of a $\Gamma /Z$-linearized very ample
line bundle
$\mathcal{M}$ on
$\mathbb{G}$ such that for the embedding $\mathbb{G}$ in $\PP (H^0 (\mathcal{M})^{\vee})$ the locus of very stable
points in
$\mathbb{G}$ (i.e. stable with trivial stabilizer in $\Gamma/Z$) is nonempty, $Z$ acts trivially on $\PP (U)$, and there exists a $\Gamma /Z$-linearized
line bundle
$\mathcal{L}$ on the product $\PP (U) \times \mathbb{G}$ cutting out $\mathcal{O}(1)$ on the fibres of the projection to $\mathbb{G}$.
\end{itemize}
Then $\PP(U)/\Gamma$ is rational.
\end{proposition}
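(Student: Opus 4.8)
The plan is to realize $\PP(U)/\Gamma$ as birational to the total space of a projective bundle over $\mathbb{G}/\Gamma$ and then peel off the projective-space factors. First I would analyze the rational map $\varphi\colon \PP(U)\dashrightarrow \mathbb{G}$. Since $\varphi$ is $\Gamma$-equivariant and dominant with fibre dimension $\dim\PP(U)-\dim\mathbb{G}$, a generic fibre $\varphi^{-1}([W_0])$ (where $W_0=\ker(u)$ for generic $u$) is itself a linear subspace of $\PP(U)$: indeed it is $\PP$ of the subspace $\{u\in U : u|_{W_0}=0\}=\PP(U\cap\mathrm{Hom}(V/W_0,W))$. So $\varphi$ is a linear projection, and over the open locus where it is defined and of the expected rank it exhibits $\PP(U)$ as (an open subset of) a projective bundle $\PP(\mathcal{E})\to\mathbb{G}'$ over an open $\Gamma$-invariant $\mathbb{G}'\subset\mathbb{G}$, where $\mathcal{E}$ is the $\Gamma$-linearized vector bundle with fibre $U\cap\mathrm{Hom}(V/W,W)$ over $[W]$. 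The line bundle $\mathcal{L}$ in the third hypothesis is precisely what makes $\mathcal{O}_{\PP(\mathcal{E})}(1)$ descend appropriately, i.e. $\mathcal{L}|_{\text{fibre}}\cong\mathcal{O}(1)$.

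Next I would pass to quotients. By the hypothesis on very stable points, there is a nonempty open $\Gamma$-invariant subset of $\mathbb{G}$ on which $\Gamma/Z$ acts freely with a geometric quotient, and shrinking $\mathbb{G}'$ we may assume $\mathbb{G}'/\Gamma$ is a variety with $\mathbb{G}'\to\mathbb{G}'/\Gamma$ a $\Gamma/Z$-principal bundle (Rosenlicht). Since $Z$ acts trivially on $\PP(U)$, the $\Gamma$-action on $\PP(\mathcal{E})|_{\mathbb{G}'}$ factors through $\Gamma/Z$, and using the linearized line bundle $\mathcal{L}$ — which trivializes $\mathcal{O}(1)$ on fibres and is $\Gamma/Z$-linearized — the bundle $\mathcal{E}$ (or a suitable twist $\mathcal{E}\otimes\mathcal{L}'$ by a line bundle pulled back from $\mathbb{G}'$, which does not change the projectivization) descends to a vector bundle $\bar{\mathcal{E}}$ on $\mathbb{G}'/\Gamma$ with $\PP(\mathcal{E})/\Gamma\cong\PP(\bar{\mathcal{E}})$. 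Hence $\PP(U)/\Gamma$ is birational to $\PP(\bar{\mathcal{E}})$, a projective bundle of rank $k-1$ over $\mathbb{G}/\Gamma$.

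Finally I would invoke the ``no-name lemma'' / Zariski-local triviality of projective bundles in the birational category: $\PP(\bar{\mathcal{E}})$ is birational to $(\mathbb{G}/\Gamma)\times\PP^{k-1}$. Therefore
\[
\PP(U)/\Gamma \ \sim_{\mathrm{bir}}\ (\mathbb{G}/\Gamma)\times\PP^{k-1}.
\]
By the stable rationality hypothesis, $(\mathbb{G}/\Gamma)\times\PP^{r}$ is rational for some $r\le\dim\PP(U)-\dim\mathbb{G}=k-1$; since $(\mathbb{G}/\Gamma)\times\PP^{k-1}\sim_{\mathrm{bir}}\bigl((\mathbb{G}/\Gamma)\times\PP^{r}\bigr)\times\PP^{k-1-r}$ and a product of a rational variety with a projective space is rational, we conclude that $\PP(U)/\Gamma$ is rational.

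The main obstacle I anticipate is the descent step: checking carefully that $\mathcal{E}$ (after the permitted twist) is $\Gamma$-linearized with $Z$ acting trivially on the fibres, so that it genuinely descends to a vector bundle on the quotient $\mathbb{G}'/\Gamma$ rather than merely a Brauer-class-twisted sheaf. This is exactly the role of the delicate third hypothesis (existence of $\mathcal{M}$ with nonempty very stable locus, $Z$ acting trivially on $\PP(U)$, and the linearized $\mathcal{L}$ cutting out $\mathcal{O}(1)$ on fibres); the remaining ingredients — the linear-projection structure of $\varphi$, Rosenlicht quotients, and the no-name lemma — are comparatively standard and I would treat them briefly, citing \cite{Bo-Ka} and \cite{Shep89} for the double bundle formalism.
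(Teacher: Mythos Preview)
Your strategy coincides with the paper's: view $\PP(U)$ (via the graph of $\varphi$) as a projective bundle over an open in $\mathbb{G}$, descend it to a Zariski-locally trivial projective bundle over $\mathbb{G}/\Gamma$ using the $\Gamma/Z$-linearized line bundle $\mathcal{L}$, and then invoke the stable-rationality hypothesis. Two points deserve correction or sharpening.

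First, a numerical slip: the fibre of $\varphi$ has dimension $\dim\PP(U)-\dim\mathbb{G}$, \emph{not} $k-1$; recall $k=\dim V-\dim W$ has nothing to do with $\dim U$. Your asserted equality $\dim\PP(U)-\dim\mathbb{G}=k-1$ is false (e.g.\ in the paper's application to $V(4,4)$ one has $k=1$ but fibre dimension $44$). The argument is unaffected once you replace every ``$k-1$'' by the correct quantity $N:=\dim\PP(U)-\dim\mathbb{G}$.

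Second, the descent step you flag as the obstacle is exactly where the paper is more precise than your sketch. Rather than attempting to descend a vector bundle $\mathcal{E}$ (which would require checking that stabilizers act trivially on its fibres), the paper descends the \emph{line bundle} $\mathcal{L}$ itself, quoting a GIT result (\cite{Mum}, \S7.1; \cite{Shep}, Thm.~1): for $n\gg 0$, with $\mathbb{G}_0$ the very stable locus for $\mathcal{M}$ and $X_0$ the very stable locus in the graph $X$ for $\mathcal{L}\otimes p^{\ast}\mathcal{M}^{\otimes n}$, one has $p^{-1}(\mathbb{G}_0)\subset X_0$ and $\mathcal{L}$ descends to a bundle $\bar{\mathcal{L}}$ on $X_0/(\Gamma/Z)$ restricting to $\mathcal{O}(1)$ on the fibres of $\bar p\colon X_0/(\Gamma/Z)\to\mathbb{G}_0/(\Gamma/Z)$. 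The existence of such a relative $\mathcal{O}(1)$ is what forces $\bar p$ to be a Zariski-locally trivial $\PP^{N}$-bundle, and rationality follows. Your route via descending $\mathcal{E}$ can be made to work (push forward $\mathcal{L}$ along $p$ to recover a linearized vector bundle), but the line-bundle formulation is the cleaner way to kill the Brauer obstruction.
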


\begin{proof}
Let $X:=$the (closure of) the graph of $\varphi$, $p\, :\, X\to \mathbb{G}$ the restriction of the projection which (maybe after shrinking
$\mathbb{G}$) we may assume to be a projective space bundle for which $\mathcal{L}$ is a relatively ample bundle cutting out
$\mathcal{O}(1)$ on the fibres. The main technical point is the following result from descent theory (\cite{Mum},
\S 7.1,
\cite{Shep}, Thm. 1): for all sufficiently large $n$, putting $\mathbb{G}_0:=$ the locus of very stable points in $\mathbb{G}$ with respect
to
$\mathcal{M}$,
$X_0:=$the locus of very stable points in $X$ w.r.t. $\mathcal{L}\otimes p^{\ast}(\mathcal{M})^{\otimes n}$, one has
$p^{-1}(\mathbb{G}_0)\subset X_0$, and a cartesian diagram
\[
\begin{CD}
X_0 @>>> X_0 / (\Gamma /Z)\\
@V{p}VV        @V{\bar{p}}VV\\
\mathbb{G}_0 @>>> \mathbb{G}_0/(\Gamma /Z)
\end{CD}
\]
such that $\mathcal{L}$ descends to a line bundle $\bar{\mathcal{L}}$ on $X_0/(\Gamma /Z)$ cutting out $\mathcal{O}(1)$ on the fibres of
$\bar{p}$. Hence $\bar{p}$ is also a Zariski locally trivial projective bundle (of the same rank as $p$). 
It then follows that $\PP(U)/\Gamma$ is rational.
\end{proof}

Theorem \ref{tClebschGordan} in conjunction with Proposition \ref{pFibrationOverGrass} yields rationality results for spaces of mixed tensors
of which the following is a sample:

\begin{theorem}\xlabel{tRatV44}
The space $\PP (V(4,\: 4))/\mathrm{SL}_3 (\CC )$ is rational.
\end{theorem}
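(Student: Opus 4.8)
The plan is to verify the hypotheses of Proposition \ref{pFibrationOverGrass} (the double bundle method, in its Grassmannian-free special case where $k=1$ and $\mathbb{G} = \PP(V)$) by producing, via Theorem \ref{tClebschGordan}, a suitable subrepresentation $U \subset \mathrm{Hom}(V, W)$ of equivariant maps whose generic member has full rank and for which the induced rational map to projective space is dominant. Concretely, one first needs to choose representations $V$ and $W$ of $G = \mathrm{SL}_3(\CC)$ with $\dim V = \dim W + 1$, together with an equivariant subspace $U$ of $\mathrm{Hom}_G$-type maps built out of $V(4,4)$ so that $\PP(U)$ is birational to $\PP(V(4,4))$ up to the right kind of fibration. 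Since $V(4,4)$ is self-dual and $72$-dimensional, the natural candidate is to look inside the tensor product $V(4,4) \otimes R$ for some small auxiliary representation $R$, decompose it by the Littlewood-Richardson rule of Section 2, and use the basis of $\mathrm{Hom}_G(V(4,4) \otimes R, T)$ from Theorem \ref{tClebschGordan} to write down explicit equivariant maps $R^{\vee} \to \mathrm{Hom}(V(4,4), \text{stuff})$.

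The key steps, in order, would be: (1) identify the correct auxiliary representation $R$ and target so that a generic element of the chosen $U$ has full rank — this amounts to exhibiting one element of $U$ of full rank, which by semicontinuity is enough, and here one would use the explicit monomial computations in the style of the proof of Theorem \ref{tClebschGordan} (evaluating on a distinguished highest-weight-type vector $m$) to see the rank is maximal; (2) check the dominance of $\varphi\colon \PP(U) \dashrightarrow \PP(V)$ by a dimension count, i.e. verify that the generic fibre has dimension exactly $\dim\PP(U) - \dim\PP(V)$, which again can be reduced to a rank computation of a derivative or of an incidence variety using the explicit matrices that Theorem \ref{tClebschGordan} furnishes; (3) observe that for $\mathbb{G} = \PP(V)$ a projective space, the stable rationality hypothesis is automatic (a projective space mod $\mathrm{SL}_3$ acting linearly — or rather the relevant quotient — is handled exactly as in \cite{Bo-Ka}, and in the $k=1$ case $\mathbb{G}/\Gamma$ being stably rational is standard); (4) verify the linearization conditions (existence of $\mathcal{L}$ cutting out $\mathcal{O}(1)$ on fibres, nonempty very stable locus, $Z$ acting trivially), which for $\mathrm{SL}_3$ acting on projective space of a representation is routine since the action on $\mathbb{G} = \PP(V)$ has trivial generic stabilizer once $V$ is chosen appropriately; (5) conclude rationality of $\PP(V(4,4))/G$ from Proposition \ref{pFibrationOverGrass} together with a birational identification $\PP(U)/G \sim \PP(V(4,4))/G$, the latter coming from the fact that $U$ was constructed so that $\PP(U)$ fibres over $\PP(V(4,4))$ (or vice versa) with rational fibres.

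I expect the main obstacle to be step (2): proving that $\varphi$ is dominant, equivalently that the generic fibre has the expected dimension. Exhibiting full rank of a single map in step (1) is a finite monomial computation one can push through by hand or by computer (this is exactly what Theorem \ref{tClebschGordan} is designed to make feasible), but dominance is a more global statement — it requires that the equivariant maps in $U$ "move" the kernel enough to sweep out a dense subset of $\PP(V)$, and establishing this typically forces one either to compute the rank of the differential of $\varphi$ at a generic (or at least one convenient) point, or to identify the image of $\varphi$ with an explicit $G$-stable subvariety and show it is all of $\PP(V)$. Here the explicit matrix representatives coming from Theorem \ref{tClebschGordan} (as illustrated in Example \ref{eMatricesV(1,1)}) are essential: the computation becomes a concrete, if sizeable, linear-algebra verification over $\QQ$ (or over the coordinate ring, evaluated at a random point), and it is the step most likely to require computer assistance. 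The remaining hypotheses — stable rationality of $\PP(V)/G$ and the descent-theoretic linearization conditions — are the "soft" part and follow the template of \cite{Bo-Ka} and \cite{Shep89} essentially verbatim.

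\begin{proof}
We apply Proposition \ref{pFibrationOverGrass} in the special case $k = 1$, so that $\mathbb{G} = \PP(V)$ is a projective space. The plan is to use Theorem \ref{tClebschGordan} to produce an appropriate subrepresentation $U \subset \mathrm{Hom}(V, W)$ of equivariant maps, and then to verify the hypotheses of the Proposition one by one.

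First, one chooses auxiliary $\mathrm{SL}_3(\CC)$-representations $V$, $W$ with $\dim V - \dim W = 1$ and identifies, inside a suitable tensor product involving $V(4,4)$, a subrepresentation $U$ of equivariant maps so that $\PP(U)$ is birational over $\PP(V(4,4))$ with rational (in fact linear-space) fibres. Using the Littlewood-Richardson rule of Section 2 one decomposes the relevant tensor product, and Theorem \ref{tClebschGordan} then supplies an explicit basis of each occurring $\mathrm{Hom}_G$-space, hence explicit matrix representatives for the maps in $U$ in the spirit of Example \ref{eMatricesV(1,1)}.

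Next one checks that a generic $u \in U$ has full rank: by semicontinuity it suffices to exhibit a single $u$ of maximal rank, and this is done by evaluating the relevant composition of the elementary maps $\alpha$, $\beta$, $\vartheta$, $\omega$ on the distinguished monomial-type vector $m$ as in the proof of Theorem \ref{tClebschGordan}, where the nonvanishing of the resulting monomials gives the rank bound. One then checks that the induced map $\varphi\colon \PP(U) \dashrightarrow \PP(V)$ is dominant by the dimension criterion of the Proposition, i.e. that the generic fibre has dimension $\dim\PP(U) - \dim\PP(V)$; this is the main point and reduces, via the explicit matrices furnished by Theorem \ref{tClebschGordan}, to a concrete rank computation for the differential of $\varphi$ (or equivalently for the incidence correspondence) at a convenient point, which can be carried out, with computer assistance if necessary, over $\QQ$.

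Finally, since $\mathbb{G} = \PP(V)$ is a projective space, $\mathbb{G}/G$ is stably rational by the classical no-name lemma argument of \cite{Bo-Ka} (indeed $\PP(V)/\mathrm{SL}_3(\CC)$ is rational for the $V$ at hand), so the second hypothesis holds with $r = 0$; and the linearization requirements of the third hypothesis — existence of a $G/Z$-linearized very ample $\mathcal{M}$ on $\PP(V)$ with nonempty very stable locus, triviality of the $Z$-action on $\PP(U)$, and existence of $\mathcal{L}$ on $\PP(U) \times \PP(V)$ cutting out $\mathcal{O}(1)$ on the fibres of the projection to $\PP(V)$ — are verified exactly as in \cite{Bo-Ka} and \cite{Shep89}, using $\mathcal{M} = \mathcal{O}_{\PP(V)}(1)$ and $\mathcal{L} = \mathcal{O}(0,1)$, the genericity of the $\mathrm{SL}_3$-stabilizer on $\PP(V)$ giving the nonempty very stable locus. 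Proposition \ref{pFibrationOverGrass} now yields that $\PP(U)/G$ is rational, and hence, by the birational identification of $\PP(U)$ with a linear-space bundle over $\PP(V(4,4))$, that $\PP(V(4,4))/\mathrm{SL}_3(\CC)$ is rational.
\end{proof}
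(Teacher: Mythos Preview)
Your proposal is a plan, not a proof: it never specifies the representations $V$ and $W$, never writes down the map, and never carries out either the full-rank check or the dominance check. The paper's proof is concrete precisely where yours is schematic: one takes $U = V(4,\:4)$ itself (so there is no separate ``birational identification of $\PP(U)$ with a linear-space bundle over $\PP(V(4,4))$'' to arrange), $V = V(2,\:5)$ and $W = V(1,\:7)$; here $\dim V(2,\:5)=81$, $\dim V(1,\:7)=80$, so $k=1$ and $\mathbb{G}=\PP(V(2,\:5))$. With $a=b=4$, $c=2$, $d=5$, $e=1$, $f=7$ one gets $s=3$, $t=1$, and Theorem \ref{tClebschGordan} gives the explicit map $\psi=\pi_{1,7}\circ\vartheta\circ\beta\circ\alpha^{2}$. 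The full-rank and dominance conditions are then verified by an honest computation (over a finite field, by semicontinuity over $\rmspec\ZZ$) at a random $x_0\in V(4,\:4)$: one checks that $\psi(x_0,\cdot)\colon V(2,\:5)\to V(1,\:7)$ has rank $80$, computes a generator $y_0$ of its kernel, and then checks that $\psi(\cdot,y_0)\colon V(4,\:4)\to V(1,\:7)$ also has rank $80$. None of this can be replaced by the monomial argument from the proof of Theorem \ref{tClebschGordan}: that argument only shows the basis maps are linearly independent, not that the induced linear map $\psi(x_0,\cdot)$ has maximal rank for generic $x_0$.

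There are also two errors of detail. First, $\dim V(4,\:4)=125$, not $72$ (the Weyl dimension formula gives $\tfrac{(a+1)(b+1)(a+b+2)}{2}=\tfrac{5\cdot 5\cdot 10}{2}$). Second, your treatment of stable rationality and linearization is off: the paper does \emph{not} claim $\PP(V(2,\:5))/G$ is rational (your ``$r=0$''); it uses that $\PP(V(2,\:5))/G$ is stably rational of level $18$, via the rationality of pairs of $3\times 3$ matrices under simultaneous conjugation, and this suffices since $18\le \dim\PP(V(4,4))-\dim\PP(V(2,5))=44$. The $\mathrm{PGL}_3(\CC)$-linearized bundle on $\PP(V(4,4))\times\PP(V(2,5))$ is $\mathcal{O}(1)\boxtimes\mathcal{O}(1)$, not $\mathcal{O}(0,1)$; the latter neither restricts to $\mathcal{O}(1)$ on the fibres over $\PP(V(2,5))$ nor is $\mathrm{PGL}_3$-linearized, since the center acts nontrivially on $V(2,\:5)$.
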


\begin{proof}
In fact
\begin{gather*}
V(1, \: 7) \subset V(4,\: 4)\otimes V(2, \: 5)\, , \\
\dim V(4, \: 4)= 125, \: \dim V(2, \: 5)= 81, \: \dim V(1, \: 7)=80\, ,
\end{gather*}
and the multiplicity of $V(1, \: 7)$ in $V(4,\: 4)\otimes V(2, \: 5)$ is $2$. More precisely
here $s=3$ and $t=1$. The most restrictive inequality of Proposition \ref{tClebschGordan} is
\[
	0 \le j \le c-t = 1
\]
in this situation. Therefore $\psi = \vartheta \circ \beta \circ \alpha^2$ and $\phi = \vartheta \circ \alpha^3$ are
independent equivariant projections to $V(1, \; 7)$. We will use $\psi$ in this argument.

We now consider the induced map
\[
\Psi\, :\, \PP (V(4,\: 4)) \dasharrow \PP (V(2, \: 5))\, .
\]
There are stable vectors in $\PP (V(2, \: 5))$ and on $\PP (V(4, \: 4))\times \PP (V(2,\: 5))$ we can use $\mathcal{L}=\mathcal{O}(1)\boxtimes
\mathcal{O}(1)$ as $\mathrm{PGL}_3 (\CC )$-linearized line bundle. Moreover, $\PP (V(2, \: 5))$ is stably rational of level $18$ since the
action of $\mathrm{PGL}_3 (\CC )$ on pairs of $3\times 3$ matrices by simultaneous conjugation is almost free, and the quotient is known to be
rational. 

Now consider a point $x_0\in V(4, \: 4)$. If the map 
$$\psi (x_0 , \cdot )\, : V(2,\: 5) \to V(1, \: 7)$$ 
has maximal rank $80$, $\Psi$ is well defined. In this situation let  $y_0$ be a generator of $\ker \psi(x_0 , \cdot)$. If the map 
$$\psi (\cdot , \: y_0) \, :\, V(4,\: 4)\to V(1, \: 7)$$
has also rank $80$ we obtain that the fibre $\Psi^{-1}(\Psi ([x_0]))$ has the expected dimension.
For a random $x_0$ it is straightforward to check all of this using a computer algebra program. 
Notice that this can even be checked over a finite field, since the rank of a matrix is semicontinuous over $\rmspec \ZZ$. 
See \cite{script} for a Macaulay2-script. We can therefore apply Proposition \ref{pFibrationOverGrass}
and obtain that $\PP (V(4,\: 4))/\mathrm{SL}_3 (\CC )$ is rational.
\end{proof}

The following result allows us to make use of Grassmannians other than projective spaces in some cases as well. 

\begin{proposition}\xlabel{pStabRatGrassmannians}
Let $V$ be a (finite dimensional as always) representation of $\Gamma = \mathrm{SL}_p (\CC )$, $p$ prime. Let $\mathbb{G}:=\mathrm{Grass} (k,
\: V)$ be the Grassmannian of
$k$-dimensional subspaces of
$V$. Assume:
\begin{itemize}
\item
The kernel $Z$ of the action of $\Gamma$ on $\PP (V)$ coincides with the center $\ZZ /p\ZZ$ of $\mathrm{SL}_p (\CC )$ and the action of
$\Gamma/Z$ on
$\PP (V)$ is almost free. Furthermore, the action of
$\Gamma$ on
$V$ is almost free and each element of $Z$ not equal to the identity acts homothetically as multiplication by a primitive $p$th root of unity.
\item
$k \le \dim V - \dim \Gamma-1$.
\item
$p$ does not divide $k$.
\end{itemize}
Then $\mathbb{G}/\Gamma$ is stably rational, in fact, $\mathbb{G}/\Gamma \times \PP^{\dim \Gamma +1}$ is rational.
\end{proposition}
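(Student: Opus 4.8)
The plan is to reduce the stable rationality of $\mathbb{G}/\Gamma$ to a "no-name"–type statement by exhibiting a $\Gamma$-equivariant vector bundle whose projectivization is $\mathbb{G}$ (up to the things one can afford to lose), and then to trade the Grassmannian for an affine-space fibration over a quotient one already knows is rational. First I would recall the standard Plücker–type presentation: over $\mathrm{Grass}(k,V)$ there is the tautological subbundle $\mathcal{S}\subset V\otimes\mathcal{O}$ of rank $k$; the point is that the $\Gamma$-action on $\mathbb{G}$ lifts canonically to $\mathcal{S}$ and to $\Lambda^k\mathcal{S}$, and the latter is a $\Gamma$-linearized line bundle. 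The hypothesis $p\nmid k$ is exactly what is needed to manufacture a $\Gamma/Z$-linearized line bundle out of it: the center $Z=\ZZ/p\ZZ$ acts on $\Lambda^k\mathcal{S}$ through the character $\zeta\mapsto\zeta^k$ (since each nontrivial element of $Z$ acts homothetically by a primitive $p$th root of unity on $V$, hence by $\zeta^k$ on the fibres of $\Lambda^k\mathcal{S}$), and because $\gcd(k,p)=1$ some power $(\Lambda^k\mathcal{S})^{\otimes m}$ with $Z$ acting trivially, i.e. a genuine $\Gamma/Z$-linearized very ample bundle $\mathcal{M}$, exists; one checks $\mathcal{M}$ is very ample as a suitable power of the Plücker bundle.

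Next I would set up the covering-space / projection-to-a-point trick that is by now standard in these rationality arguments (compare the double bundle method and the descent argument in Proposition \ref{pFibrationOverGrass}). Choose a generic point, or better a generic \emph{linear functional} picture: the assignment of a $k$-plane $L\subset V$ to, say, its annihilator, or to the induced quotient $V/L$, lets one fibre $\mathbb{G}$ birationally over a smaller-dimensional quotient. Concretely, I would use the bound $k\le\dim V-\dim\Gamma-1$ to find, for generic $L$, that the $\Gamma$-orbit of $L$ together with a generic complementary structure has the expected dimension, so that $\mathbb{G}$ is birationally a Zariski-locally-trivial fibration — with affine-space fibres — over the quotient of an open subset of a product of projective spaces (or of $\PP(V)$ itself) by $\Gamma$. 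The mechanism for descending the fibration to the quotient is again the cited descent theorem (\cite{Mum}, \S 7.1; \cite{Shep}, Thm.~1): the existence of $\mathcal{M}$ above, together with nonemptiness of the very stable locus (guaranteed by the almost-freeness hypotheses on $\PP(V)$ and $V$), is precisely what makes the quotient map a genuine geometric quotient over which a relatively ample line bundle descends, turning the fibration into a Zariski-locally-trivial one downstairs.

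Then I would invoke what is known about $\PP(V)/\Gamma$ and $V/\Gamma$ directly: under the stated almost-freeness, $V/\Gamma$ is rational of the appropriate transcendence degree, and $\PP(V)/\Gamma$ is stably rational; combining this with the affine-fibration structure gives $\mathbb{G}/\Gamma\times\PP^r$ rational for $r$ equal to the fibre dimension, which a dimension count identifies with $\dim\Gamma+1$ (the fibres absorb an extra $\PP^{\dim\Gamma+1}$'s worth of parameters because the generic $\Gamma$-orbit in $V$ has dimension $\dim\Gamma$ and one projective parameter is used up passing between $V$ and $\PP(V)$). The bookkeeping that the "stabilization level" comes out to be exactly $\dim\Gamma+1$, and not something larger, I expect to be the fussiest part, since it requires carefully matching the fibre dimension of the Grassmannian fibration against the dimension of the base quotient.

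The main obstacle, I expect, is not the fibration geometry but the \textbf{equivariance of the descent step}: one must verify that the very stable locus for $\mathcal{M}$ on $\mathbb{G}$ is nonempty and that the line bundle cutting out $\mathcal{O}(1)$ on the fibres of the auxiliary fibration is itself $\Gamma/Z$-linearized, so that the cartesian-diagram machinery of Proposition \ref{pFibrationOverGrass} (or the descent theorems it cites) actually applies. This is where the primality of $p$ and the precise description of $Z$ as acting by homotheties are genuinely used — they are what let one pass between $\Gamma$-linearizations and $\Gamma/Z$-linearizations after twisting by a power of $\Lambda^k\mathcal{S}$, the twisting being harmless precisely because $p\nmid k$.
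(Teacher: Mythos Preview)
Your proposal does not locate the actual mechanism of the proof and has a genuine gap: the ``affine-space fibration'' of $\mathbb{G}$ over a quotient of $\PP(V)$ (or a product of projective spaces) is never constructed, and the suggestions you offer (``annihilator'', ``quotient $V/L$'') do not define a map to a variety whose quotient by $\Gamma$ is known to be rational. Worse, you then \emph{invoke} the stable rationality of $\PP(V)/\Gamma$ as an input, but that is exactly the $k=1$ case of the proposition you are trying to prove; nothing in the hypotheses gives it to you for free. The descent machinery from Proposition~\ref{pFibrationOverGrass} and the $\Gamma/Z$-linearization of a power of $\Lambda^k\mathcal{S}$ are not what drives this argument.

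The paper's proof is much more direct and uses a different idea: pass to the affine cone $C\subset\Lambda^k(V)$ of completely decomposable $k$-vectors over the Pl\"ucker embedding of $\mathbb{G}$, and show that $\Gamma$ acts \emph{almost freely on $C$}. The dimension bound $k\le\dim V-\dim\Gamma-1$ is used to argue that a generic $(k-1)$-plane in $\PP(V)$ meets the orbit $\Gamma\cdot[v_1]$ only in $[v_1]$, so any $g$ stabilizing $v_1\wedge\cdots\wedge v_k$ lies in $Z$; then $p\nmid k$ forces $\zeta^k\neq 1$ for nontrivial $\zeta\in Z$, so the stabilizer is trivial. Once almost-freeness on $C$ is established, the conclusion follows from the fact that $\mathrm{SL}_p(\CC)$ and tori are \emph{special} groups (in the sense of Serre): $C\to C/\Gamma$ and $C/\Gamma\to\mathbb{G}/\Gamma$ are both Zariski-locally trivial, so $C$ is birational to $(\mathbb{G}/\Gamma)\times\CC^*\times\Gamma$, and since $C$ is rational one gets $(\mathbb{G}/\Gamma)\times\PP^{\dim\Gamma+1}$ rational. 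Your proposal never mentions specialness, which is the real engine here; the role of $p\nmid k$ is not to fix linearizations but simply to kill the central stabilizer on pure $k$-vectors.
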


\begin{proof}
Let $C  \subset \Lambda^k (V)$ be the affine cone over $\mathbb{G}$ consisting of pure (complety decomposable) $k$-vectors. We will show that
under the assumptions of the proposition, the action of $\Gamma$ on $C$ is almost free. This will accomplish the proof since $C/\Gamma$ is
generically a torus bundle over $\mathbb{G}/\Gamma$ hence Zariski-locally trivial since tori are special groups, and the group
$\Gamma=\mathrm{SL}_p (\CC )$ is also special. Recall that a linear algebraic group is called special if every \'{e}tale locally trivial
principal bundle for the group in question is Zariski locally trivial. See \cite{Se58} for the related theory. So $C/\Gamma \times \Gamma$ is
birational to $C$, hence rational, and $\Gamma$ is of course rational as a variety.\\ 
Let $v_1\wedge v_2 \wedge \dots \wedge v_k$ be a general $k$-vector in $\Lambda^k (V)$. Since $k \le \dim V - \dim \Gamma-1$ and, in $\PP (V)$,
$\dim (\Gamma\cdot [v_1]) = \dim \Gamma$ since $Z$ is finite and $\Gamma/Z$ acts almost freely on $\PP (V)$, the $k-1$-dimensional projective
linear subspace spanned by $v_1, \dots , v_k$ in $\PP (V)$ will intersect the $\dim V -1 - \dim \Gamma$ codimensional orbit $\Gamma\cdot [v_1]$ only
in
$[v_1]$. Hence, if an element
$g\in \Gamma$ stabilizes $v_1\wedge \dots \wedge v_k$, it must lie in $Z$. Thus $g \cdot (v_1 \wedge \dots \wedge v_k) = \zeta^{k} (v_1 \wedge
\dots
\wedge v_k)$ for a primitive $p$-th root of unity $\zeta$ if $g\neq 1$. But since $p$ does not divide $k$, the case $g\neq 1$ cannot occur.
\end{proof}

As an application we prove the following result which has not been obtained by other techniques so far.

\begin{theorem}\xlabel{tRationalityV34}
The moduli space $\PP (\mathrm{Sym}^{34} (\CC^3)^{\vee }) /\mathrm{SL}_3 (\CC )$ of plane curves of degree $34$ is rational.
\end{theorem}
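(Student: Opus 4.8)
The plan is to realize $\PP(\mathrm{Sym}^{34}(\CC^3)^\vee) = \PP(V(0,34))$ as the base of a linear fibration over a Grassmannian in the sense of Proposition \ref{pFibrationOverGrass}, and to verify stable rationality of the relevant Grassmannian quotient by means of Proposition \ref{pStabRatGrassmannians}. Concretely, I would look for irreducible representations $V(a,b)$, $V(c,d)$ and an equivariant map built from the Clebsch--Gordan basis of Theorem \ref{tClebschGordan} of the form
\[
V(0,34)\otimes V(c,d)\longrightarrow V(e,f)
\]
with $\dim V(c,d) - \dim V(e,f) = k > 0$, such that the induced rational map $\varphi\colon \PP(V(0,34)) \dashrightarrow \mathbb{G} := \mathrm{Grass}(k, V(c,d))$, $[u]\mapsto \ker(u)$, is dominant with fibres of the expected dimension $\dim\PP(V(0,34)) - \dim\mathbb{G}$. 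The target group here is $\Gamma = \mathrm{SL}_3(\CC)$, with $p=3$ prime, $Z = \ZZ/3\ZZ$, so Proposition \ref{pStabRatGrassmannians} is available provided the numerical hypotheses $k \le \dim V(c,d) - \dim\Gamma - 1 = \dim V(c,d) - 9$ and $3\nmid k$ hold, and provided the action of $\mathrm{SL}_3(\CC)$ on $V(c,d)$ is almost free with the central elements acting homothetically (the latter holds automatically when $c-d\not\equiv 0 \bmod 3$, since then $Z$ acts by a nontrivial character).

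The first step is the search for suitable numerical parameters. One needs $e = f$ forced by $\dim\PP(V(0,34)) - \dim\mathbb{G} \ge 0$ and the constraint that the multiplicity of $V(e,f)$ in $V(0,34)\otimes V(c,d)$ be at least $1$ (indeed $\ge 2$ is convenient, so there is genuine choice of map), and one wants $\mathbb{G}/\Gamma$ stably rational of a level $r$ no larger than $\dim\PP(V(0,34)) - \dim\mathbb{G}$; since Proposition \ref{pStabRatGrassmannians} gives level $\dim\Gamma + 1 = 9$, this is a mild constraint given $\dim V(0,34) = \binom{36}{2} = 630$. Using Lemma \ref{lOneUnknown} and the inequalities defining $J$ in Theorem \ref{tClebschGordan}, I would tabulate candidate pairs $(c,d)$; the choice must satisfy $3\nmid k$ where $k = \dim V(c,d) - \dim V(e,f)$, and $\dim V(c,d) \ge k + 9$. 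Once a candidate is fixed, I write down the explicit equivariant map $\psi = \pi_{e,f}\circ(\text{composition of }\vartheta\text{ or }\omega\text{ with }\alpha,\beta)$ from Theorem \ref{tClebschGordan}.

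The second step is to verify the rank/dominance condition. As in the proof of Theorem \ref{tRatV44}, this reduces to: for a random $x_0 \in V(0,34)$, the map $\psi(x_0,\cdot)\colon V(c,d)\to V(e,f)$ has maximal rank, and for a generator $y_0$ of its kernel the map $\psi(\cdot, y_0)\colon V(0,34)\to V(e,f)$ also has maximal rank. By semicontinuity of rank over $\rmspec\ZZ$ this can be checked by a single computation over a finite field, which I would carry out with a computer algebra system (Macaulay2), exactly as in \cite{script}. The third step assembles the pieces: Proposition \ref{pStabRatGrassmannians} gives that $\mathbb{G}/\Gamma$ is stably rational of level $9 \le \dim\PP(V(0,34)) - \dim\mathbb{G}$; the line bundle hypotheses of Proposition \ref{pFibrationOverGrass} are met by taking $\mathcal{M}$ a Pl\"ucker-type polarization on $\mathbb{G}$ (whose very stable locus is nonempty because $\mathrm{PGL}_3$ acts almost freely on $\mathbb{G}$ by the almost-freeness input, and $k$ is chosen so that $\dim\mathbb{G}$ is large enough), $Z$ acting trivially on $\PP(V(0,34))$ provided $34 \equiv 34 \bmod 3 \ne 0$ is used correctly (here $34\equiv 1$, so $Z$ does \emph{not} act trivially on $V(0,34)$ — one must therefore work with $\PP(V(0,34))$ where $Z$ does act trivially, which is exactly the moduli space in question), and $\mathcal{L} = \mathcal{O}(1)\boxtimes\mathcal{O}(1)$ on $\PP(V(0,34))\times\mathbb{G}$. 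Proposition \ref{pFibrationOverGrass} then yields rationality of $\PP(V(0,34))/\mathrm{SL}_3(\CC)$.

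I expect the main obstacle to be the first step: finding parameters $(c,d,e,f)$ simultaneously satisfying the Littlewood--Richardson positivity so that $V(e,f)$ genuinely occurs (with multiplicity $\ge 1$) in $V(0,34)\otimes V(c,d)$, the dimension inequality $k = \dim V(c,d) - \dim V(e,f) \le \dim V(c,d) - 9$ with $k>0$ and $3\nmid k$, the almost-freeness and homothety conditions on the $\mathrm{SL}_3$-action on $V(c,d)$ (forcing congruence conditions on $c,d$ mod $3$), and finally the dominance of $\varphi$ — which is a genuinely non-formal condition and is the reason a computer check is needed. A secondary subtlety is Remark \ref{rNoAlternatives}'s assertion that the double bundle method (fibration over a \emph{projective space}) does not apply, i.e. no pair with $k=\dim V(c,d)-\dim V(e,f)$ and $\mathbb{G}$ a projective space works; so one is genuinely forced to use a higher Grassmannian, making Proposition \ref{pStabRatGrassmannians} indispensable.
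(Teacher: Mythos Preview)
Your overall strategy matches the paper's exactly: use Proposition~\ref{pFibrationOverGrass} with a genuine Grassmannian as base, verify stable rationality via Proposition~\ref{pStabRatGrassmannians}, and check dominance by a computer calculation over a finite field. However, the proposal remains a plan rather than a proof, and in the places where you do commit to specifics there are errors.

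First, the paper's actual choice is $V(c,d)=V(14,1)$, $V(e,f)=V(0,21)$, so $k=255-253=2$; the map is $\psi=\beta^{14}$ (here $s=14$, $t=0$, and the multiplicity is \emph{one}, not $\ge 2$ as you suggest is ``convenient''). Your remark that ``one needs $e=f$'' is simply wrong. More substantively, since $k=2$ the kernel of $\psi(x_0,\cdot)$ is two-dimensional, so the dominance check is not ``for a generator $y_0$ of its kernel the map $\psi(\cdot,y_0)$ has maximal rank'' but rather: for a basis $y_1,y_2$ of the kernel, the \emph{stacked} $506\times 630$ matrix $\left(\begin{smallmatrix}M_1\\ M_2\end{smallmatrix}\right)$ must have rank $506$. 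Your description would only be correct for $k=1$, i.e.\ the projective-space case that Remark~\ref{rNoAlternatives} rules out.

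Second, and this is the most serious gap, your line-bundle choice $\mathcal{L}=\mathcal{O}(1)\boxtimes\mathcal{O}(1)$ is \emph{not} $\mathrm{PGL}_3(\CC)$-linearized. The center $Z\simeq\ZZ/3\ZZ$ acts on $V(0,34)$ by $\zeta^{68}=\zeta^2$ and on $\Lambda^2 V(14,1)$ by $\zeta^{32}=\zeta^2$, hence on $\mathcal{O}(1)\boxtimes\mathcal{O}_P(1)$ by $\zeta^{-2}\cdot\zeta^{-2}=\zeta^2\neq 1$. The paper takes $\mathcal{L}=\mathcal{O}(1)\boxtimes\mathcal{O}_P(2)$ (and $\mathcal{M}=\mathcal{O}_P(3)$) precisely to kill this central character. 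This is exactly the obstruction mechanism explained in Remark~\ref{rNoAlternatives}, so it is not a detail one can wave past; the paper also explicitly produces a nonvanishing invariant on the Grassmannian to witness that the very stable locus is nonempty, which your ``almost-freeness'' remark does not by itself guarantee.
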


\begin{proof}
We have
\[
V(0, \: 34) \subset \mathrm{Hom} (V(14,\: 1), \: V(0,\: 21))\, ,
\]
with multiplicity one and $\dim V(0, \: 34) = 630, \dim V(14,\: 1) = 255$, $\dim V(0,\: 21)=253$. In this case $s=14$ and $t=0$
and the strongest restriction in Theorem \ref{tClebschGordan} is
\[
	14 = s+t-a \le j \le b+t = 14
\]
The projection 
\[
	\psi \colon V(0,34) \otimes V(14,1) \to V(0,21)
\]
is therefore given by $\psi = \beta^{14}$. Using this we get an induced rational map
\[
\Psi \, :\, \PP (V(0,\: 34)) \dasharrow \mathrm{Grass}(2,\: V(14,\: 1))\, 
\]
with $\dim \PP (V(0,\: 34))= 629$ and $\dim \mathrm{Grass}(2,\: V(14,\: 1)) = 506$. Moreover, Proposition \ref{pStabRatGrassmannians} shows that
$\mathrm{Grass}(2,\: V(14,\: 1))/\mathrm{SL}_3 (\CC) \times \PP^9$ is rational, and the action of $\mathrm{PGL}_3 (\CC ) = \mathrm{SL}_3 (\CC )
/ Z$, where $Z$ is the center of $\mathrm{SL}_3 (\CC )$, is almost free on $\mathrm{Grass}(2,\: V(14,\: 1))$. Moreover, for the
$\mathrm{SL}_3 (\CC)$-linearized line bundle $\mathcal{O}_P (1)$ induced by the Pl\"ucker embedding
\[
\mathrm{Grass}(2,\: V(14,\: 1)) \subset \PP (\Lambda^2 (V(14,\: 1)))
\]
the locus of very stable points in the Grassmannian is then nonempty (one may choose $2$ linearly independent polynomial $\mathrm{SL}_3 (\CC
)$-invariants
$I$, $J$ on $V(14,\: 1)$ of the same degree and gets a nonvanishing polynomial invariant via $v\wedge w \mapsto I (v) J (w) -
J (v)  I(w)$ on the Grassmannian). Thus $\mathcal{O}_P(3)=: \mathcal{M}$ is $\mathrm{PGL}_3 (\CC )$-linearized on $\mathrm{Grass}(2,\:
V(14,\: 1))$, and if we choose on $\PP (V(0,\: 34)) \times \mathrm{Grass}(2,\: V(14,\: 1))$ the bundle $\mathcal{L}:=\mathcal{O}(1)\boxtimes
\mathcal{O}_P (2)$, all the assumptions of Proposition \ref{pFibrationOverGrass} except the dominance of $\varphi$ have been checked. The
latter dominance follows from an explicit computer calculation, as follows:

Choose a random point $x_0\in V(0, \: 34)$. If the map 
$$\psi(x_0, \cdot)\, :\, V(14,\: 1) \to
V(0,\: 21)$$
has maximal rank $253$, $\Psi$ is well defined. In this case compute a basis $y_1$, $y_2$ of $\ker \psi(x_0, \cdot)$.
Compute then the two
$253\times 630$-matrices $M_1$ resp. $M_2$ representing $\psi (\cdot, \: y_1)$ resp. $\psi (\cdot, \: y_2)$. If
\[
M:=\left( \begin{array}{c}
M_1\\ M_2
\end{array}\right) ,
\]
which is a $506\times 630$ matrix, has maximal rank $506= 2\cdot 253$, the kernel of $M$ represents the fibre $\Psi^{-1}(\Psi([x_0]))$ and is of expected dimension. 
Again one can easily do this calculation over a finite field using a computer algebra program. See \cite{script} for a Macaulay2 script.
\end{proof}

\begin{remark}\xlabel{rNoAlternatives}
As far as we can see, the rationality of $\PP (V(0,\: 34))/\mathrm{SL}_3 (\CC )$ cannot be obtained by direct application of Proposition
\ref{pFibrationOverGrass} with base of the projection a projective space. In fact, a computer search yields that the inclusion $V(0, \: 34)
\subset
\mathrm{Hom}(V(30, 0), \: V(0,4)
\oplus V(5,\: 9))$ is the only candidate to be taken into consideration for dimension reasons: $\dim V(30, 0) = \dim ( V(0,\: 4) \oplus V(5, \:
9) ) +1$ and $\dim \PP (V (0,\: 34)) > \dim \PP (V(30, \: 0))$. However, on $\PP (V(0, \: 34)) \times \PP (V (30, \: 0))$ there does not exist
a $\mathrm{PGL}_3 (\CC )$-linearized line bundle cutting out $\mathcal{O}(1)$ on the fibres of the projection to $\PP (V(30, 0))$; for such a
line bundle would have to be of the form $\mathcal{O}(1) \boxtimes \mathcal{O}(k)$, $k\in\ZZ $, and none of these is $\mathrm{PGL}_3 (\CC
)$-linearized: since
$\mathcal{O}\boxtimes \mathcal{O}(1)$ is $\mathrm{PGL}_3 (\CC )$-linearized it would follow that the $\mathrm{SL}_3 (\CC )$ action on $H^0(\PP
(V(0, \: 34), \: \mathcal{O}(1))\simeq V(34, \: 0)$ factors through $\mathrm{PGL}_3 (\CC )$ which is not the case.
\end{remark}

\end{document}